\documentclass[11pt,twoside]{amsart}
\usepackage{latexsym,amssymb,amsmath}
\usepackage[all]{xy}
\usepackage{mathrsfs}
\usepackage{float}
\usepackage{hyperref}
\usepackage{tikz}
\usepackage{multicol}

\numberwithin{equation}{section}
\newtheorem{theorem}{Theorem}[section]
\newtheorem{lemma}[theorem]{Lemma}
\newtheorem{proposition}[theorem]{Proposition}
\newtheorem{corollary}[theorem]{Corollary}
\newtheorem{conjecture}[theorem]{Conjecture}

\theoremstyle{definition}
\newtheorem{definition}[theorem]{Definition}
\newtheorem{procedure}[theorem]{Procedure}
\newtheorem{remark}[theorem]{Remark}
\newtheorem{example}[theorem]{Example}

\begin{document}

\title[Systems of parameters for monomial ideal quotients]{Systems of
parameters consisting of linear 
forms for monomial ideal quotients}

\thanks{The first author was supported by a scholarship from Secihti.
The other two authors were supported by SNII}

\author[E. A. Contreras]{Edwin A. Contreras}
\address{
Departamento de Matem\'aticas\\
Cinvestav, Av. IPN 2508, 07360, CDMX, M\'exico.
}
\email{acontreras@math.cinvestav.mx}

\author[E. Reyes]{Enrique Reyes}
\address{
Departamento de Matem\'aticas\\
Cinvestav, Av. IPN 2508, 07360, CDMX, M\'exico.
}
\email{ereyes@math.cinvestav.mx}

\author[R. H. Villarreal]{Rafael H. Villarreal}
\address{
Departamento de Matem\'aticas\\
Cinvestav, Av. IPN 2508, 07360, CDMX, M\'exico.
}
\email{rvillarreal@cinvestav.mx}

\keywords{systems of parameters, monomial ideals, linear forms, MDS
codes, minimal primes}  
\subjclass[2020]{Primary 13F55; Secondary 13H10}


\begin{abstract} 
Let $S=K[x_1,\ldots,x_n]$ be a polynomial ring over a field $K$ and
let $I$ be a monomial ideal of $S$. We classify linear systems of
parameters of $S/I$ over any field $K$ using linear algebra and show
explicit linear systems of parameters when $K$ has at least $n$
elements. If $I(G)$ is the edge ideal of a perfect graph $G$, a cycle
or the complement of a cycle, we show
that $S/I(G)$ has a 0-1 linear system of parameters, and for graphs with independence number 
equal to $2$, we characterize when $S/I(G)$ has a 0-1 linear system of
parameters. 
\end{abstract}

\maketitle

\section{Introduction}

Systems of parameters of standard graded algebras over fields always
exist (Theorem~\ref{hsop}) and they play an
important role in combinatorial commutative algebra, specially in the
study of Cohen--Macaulay algebras and Hilbert series
\cite{BHer,Eisen,readdy,Sta1,Sta2,monalg-3rd-edition}. They can be
interpreted in terms of Noether normalizations of these algebras
(Proposition~\ref{sop-noether}).

Let $S=K[x_1,\ldots,x_n]=\bigoplus_{i=0}^\infty S_i$ be a polynomial
ring over a field $K$ with the standard grading and
let $I$ be a monomial ideal of $S$. The radical of $I$ is denoted by
${\rm rad}(I)$. A set of homogeneous
elements $\underline{f}=\{f_1,\ldots,f_d\}$ in $S$  
is called a {\it system of
parameters\/} (s.o.p. for
short) for $S/I$ if $d=\dim(S/I)$ and 
$$
{\rm rad}(I,f_1,\ldots,f_d)=\mathfrak{m}, 
$$
where $\mathfrak{m}=(x_1, \ldots,x_n)$ is the maximal ideal of $S$. This is equivalent to the
usual definition of a system of parameters
(Definition~\ref{sop-def}). By a result of Stanley, $S/I$ 
has a non-linear Noether normalization given by the elementary
symmetric polynomials up to degree $d$
(Proposition~\ref{stanley-sop}). 

In this work, we focus in the study of
linear systems of parameters and particularly in those where
the linear forms $f_1,\ldots,f_d$ have 0-1 coefficients, that is,
coefficients in $\{0,1\}$.   
If $K$ is an infinite field, a system of
parameters for $S/I$ consisting of linear forms always exist (Theorem~\ref{hsop}) and can be
computed using \textit{Macaulay}$2$ \cite{mac2} and the
\textit{NoetherNormalization} package
\cite{noethernormalization}. 

For Stanley--Reisner rings and for quotients of monomial ideals of 
K\"onig type, Herzog and Moradi exhibit non-linear universal systems
of parameters \cite[Theorem~3.1]{2021-HerzogMoradi}
and linear systems of parameters
 \cite[Theorem~2.3]{2021-HerzogMoradi} that are independent
of the field $K$, respectively. For other results on universal
systems of parameters of Stanley--Reisner rings including its depth 
sensitivity, see the papers of Adams and Reiner 
\cite[p.~152, Theorem~5.3]{Adams-Reiner}, Smith \cite{Smith}, and
references therein. 

For convenience, we use the following
multi-index notation to denote the monomials of $S$:
\[
x^{a}:=x_1^{a_{1}}\cdots x_n^{a_{n}}\ \text{ for } a=(a_{1},\ldots,a_n)\in \mathbb{N}^n.
\]
\quad A prime ideal $\mathfrak{p}$ of $S$ is an \textit{associated
prime} of $S/I$ 
if $\mathfrak{p}=(I\colon x^a)$ for some monomial $x^a\in S$. It is
not hard to see that any associated prime of $S/I$ is generated by a
subset of $\{x_1,\ldots,x_n\}$. 
The set of associated primes of $S/I$ is denoted by ${\rm Ass}(I)$. The \textit{minimal 
primes} of $I$ are the minimal primes of ${\rm Ass}(I)$ with respect to
inclusion.

We come to one of our main results that gives a \textit{rank
criterion} for linear systems of parameters for monomial ideals
quotients, 
and that is also valid for graded
ideals whose minimal primes are monomial ideals (Remark~\ref{sop-min-mono}).

\noindent {\bf Theorem~\ref{rank-criterion}.}\textit{
Let $S=K[x_1,\ldots,x_n]$ be a polynomial ring over a field $K$, let
$I$ be a monomial ideal of $S$, and let $d= \dim(S/I)$. A set $\{f_1,\ldots,f_d\}$,
$f_i=\sum_{j=1}^na_{i,j}x_j$, $a_{i,j}\in K$, $i=1,\ldots,d$, is a system of
parameters for $S/I$ if and only if for each minimal prime
$\mathfrak{p}$ of $I$ the submatrix 
$$
A_\mathfrak{p}:=(a_{i,j})_{x_j\notin\mathfrak{p}}
$$
of the $d\times n$ matrix $A:=(a_{i,j})$ has full column rank. 
}

As an application, the quotient ring of a monomial ideal $I$ and the
quotient ring of its radical have the same
linear systems of parameters, because $I$ and its radical have the same minimal primes
and the same dimension (Corollary~\ref{coro-I-radI}). We obtain 
a Cohen--Macaulay classification of monomial
ideals of $S$ of dimension $2$ over a field $K$ of characteristic $0$
using a universal explicit linear system of parameters that works for
all monomial ideals of dimension $2$ (Corollary~\ref{coro-dim=2}).

To give a version of the rank criterion using the combinatorics of $I$, we
now introduce clutters and their edge ideals and invariants. 
A {\it clutter\/} $\mathcal{C}$
with vertex set $V(\mathcal{C})=\{x_1,\ldots,x_n\}$ is a family $E(\mathcal{C})$ of subsets of 
$V(\mathcal{C})$ called edges none of which is included in
another. One example of a clutter is a graph $G$ with the
vertices and edges defined in the usual way. The {\it edge
ideal\/} of a clutter $\mathcal{C}$, denoted by $I(\mathcal{C})$,
is the ideal of $S$ given by
$$
I(\mathcal{C}):=(\{\textstyle\prod_{x_i\in e}x_i \mid e\in E(\mathcal{C})\}).
$$
\quad Edge ideals of graphs and clutters were 
introduced in \cite{Vi2} and \cite{clutters,Ha-VanTuyl}, 
respectively. 
A subset $F$ of $V(\mathcal{C})$ is called 
{\it independent\/} or {\it
stable\/} if $e\not\subset F$ for any  
$e\in E(\mathcal{C})$, and a subset of $V(\mathcal{C})$ is a \textit{vertex
cover} if and only if its complement is an independent set. The number of
vertices in any smallest vertex cover, denoted by 
$\alpha_0({\mathcal C})$, is the {\it covering
number}. The {\it independence
number}, denoted by
$\beta_0(\mathcal{C})$, is the number 
of vertices in 
any largest independent set. The algebraic translation is that the
height of $I(\mathcal{C})$ is $\alpha_0({\mathcal C})$ and the
dimension of $S/I(\mathcal{C})$ is $\beta_0(\mathcal{C})$
\cite[p.~199]{monalg-3rd-edition}. 

The independent sets of a clutter $\mathcal{C}$ are the faces of the
independence complex $\Delta_\mathcal{C}$ of the Stanley--Reisner ring
$S/I(\mathcal{C})$. We recover a result of Stanley that was stated in
\cite[p.~150]{Stanley-sop} using the maximal faces of
$\Delta_\mathcal{C}$. Kind and Kleinschmidt show this result when
$\Delta_\mathcal{C}$ is a 
pure shellable complex \cite[Satz, p.~175]{kind}, see also
\cite[Theorem~2.1]{readdy}.

\noindent {\bf Corollary~\ref{sop-clutters}.} \cite[p.~150]{Stanley-sop}\textit{
Let $I=I(\mathcal{C})$ be the edge ideal of a clutter $\mathcal{C}$, and
let $d=\beta_0(\mathcal{C})$ be the dimension of $S/I$. A set $\{f_1,\ldots,f_d\}$,
$f_i=\sum_{j=1}^na_{i,j}x_j$, $a_{i,j}\in K$, $i=1,\ldots,d$, is a system of
parameters for $S/I$ if and only if for each maximal independent set 
$F$ of $\mathcal{C}$ the $d\times|F|$ submatrix 
$$
A_F:=(a_{i,j})_{x_j\in F}
$$
of the $d\times n$ matrix $A:=(a_{i,j})$ has full column rank 
equal to $|F|$. 
}

If the
field $K$ has at least $n$ elements, we give an explicit linear
system of parameters for $S/I$  
 that can
be used to determine when $S/I$ is Cohen--Macaulay, and to
compute the multiplicity, the canonical module, and the type of these
rings \cite{BHer,2021-HerzogMoradi,monalg-3rd-edition}
(Theorem~\ref{explicit-lsop}, Example~\ref{explicit-lsop-ex}). 
We can use generator matrices of \textit{maximum distance separable
linear codes} (MDS codes) to construct linear
systems of parameters of monomial ideals over finite fields 
(Proposition~\ref{mds-sop}, Example~\ref{sop-from-ff}). This is
possible due to the fact that MDS codes can be characterized by the 
maximal minors of generator matrices (Theorem~\ref{mds-minors}).   

We study 0-1 linear systems of 
parameters of quotient rings of monomial ideals
(Definition~\ref{0-1-sop-def}), where there are already some results
in the literature \cite{2021-HerzogMoradi}. In Section~\ref{section-graphs}, we
focus on those rings arising from edge ideals of graphs. 

Let $G$ be a graph and let $I(G)$ be its edge ideal. If
$\beta_0(G)=2$, we prove that $S/I(G)$ has a  0-1
linear system 
of parameters if and only if $V(G)$ has a partition in three or in
two cliques of $G$ (Definition~\ref{clique},
Proposition~\ref{beta_0=2}). 
We show that the complement of the Mycielski--Gr\"otzsch graph has no 
0-1 linear system of parameters over any field $K$ 
(Proposition~\ref{no-0-1-sop-prop}, Example~\ref{example-comp-Grotzsch-graph}).

 The edge ring
$S/I(G)$ of $G$ has a 0-1 linear system of parameters in the following cases.
\begin{enumerate}
\item[\rm(a)] Perfect graphs (Definition~\ref{perfect-def},
Theorem~\ref{perfect-sop});
\item[\rm(b)] K\"onig graphs
(Definition~\ref{konig-def}, Proposition~\ref{konig-0-1-sop});
\item[\rm(c)] A cycle of length at least $4$ and its complement 
(Propositions~\ref{konig-0-1-sop}, \ref{odd-cycle}, and
\ref{antihole-star-sop});
\item[\rm(d)] $\alpha_0$-reducible graphs whose components have 0-1 edge rings
with linear systems of parameters (Definition~\ref{reducible-def},
Proposition~\ref{reduction-star-0-1}).  
\end{enumerate}

Item (b) holds more generally for K\"onig monomial ideals 
\cite{2021-HerzogMoradi}. 
We introduce the notion of a star system of parameters
of $S/I(G)$ (Definition~\ref{star-sop}). From our results all graphs
in items (b) and (c) have star systems of parameters. If $G$ is
an $\alpha_0$-reducible graph whose components have 0-1 edge rings 
with star systems of parameters, then also $S/I(G)$ has a star system of
parameters. If 
$S/I(G)$ has a 0-1 system of parameters, we conjecture that 
$S/I(G)$ also has a star-system of parameters
associated to some maximal independent set $\{z_1, \ldots, z_d\}$ of
$G$ (Conjecture~\ref{conjecture-star}). 

In Section~\ref{examples-section}, we give several examples illustrating some of our 
results and, in Appendix~\ref{procedures-sop}, we give the procedures for
\textit{Macaulay}$2$ \cite{mac2} that we use to compute linear systems of
parameters of monomial ideals quotients and their algebraic and
combinatorial invariants. In
Procedure~\ref{0-1-sop}, we define a function zeroOneSOP that tests for the 
existence of a 0-1 linear system of parameters for a monomial ideal
quotient and returns one when it exists.

For unexplained terminology and additional information, we refer to
\cite{BHer,Mats,Sta2} for commutative algebra and
\cite{herzog-hibi-book,Vi2,monalg-3rd-edition} for monomial ideals and
edge ideals of graphs.

\section{Preliminaries}\label{prelim-section}

\begin{definition}\label{sop-def}\rm
Let $K$ be a field and let $R=\bigoplus_{i=0}^\infty R_i$ be a
standard graded $K$-algebra. A set of homogeneous
elements $\{f_1,\ldots,f_d\}$ of $R$ 
is called a {\it system of 
parameters\/} for $R$ if $d=\dim(R)$ and ${\rm
rad}(f_1,\ldots,f_d)=R_+=\bigoplus_{i=1}^\infty R_i$. If $R$ is
presented as $S/I$, where
$S=K[x_1,\ldots,x_n]=\bigoplus_{i=0}^\infty S_i$ is 
a polynomial ring with the standard grading and 
$I$ is a graded ideal of $S$, we say that a set of homogeneous
polynomials $\underline{f}=\{f_1,\ldots,f_d\}\subset S$ 
is a {\it system of
parameters\/} for $S/I$ if the image of $\underline{f}$ under the
canonical map $\varphi\colon S\rightarrow S/I$, $h\mapsto h+I$, is a system of parameters for $S/I$. 
\end{definition}



\begin{theorem}\cite[Theorem 3.1.23]{monalg-3rd-edition}\label{hsop}
Let $S$ be a polynomial ring over a 
field $K$ and let $I$ be a graded ideal of $S$. Then, there are
homogeneous polynomials $f_1,\ldots,f_d$ in $S$ such that 
\[
\dim(S/(I,f_1,\ldots,f_i))=d-i,\, \, \mbox{ for }i=1,\ldots,d,
\] 
where $d=\dim(S/I)$. In particular, for $i=d$, ${\rm
rad}(I,f_1,\ldots,f_d)=\mathfrak{m}$. Furthermore, if $K$ is an
infinite field, then $f_1,\ldots,f_d$ can be chosen to be linear
forms. 
\end{theorem}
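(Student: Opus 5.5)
We prove the statement by induction on $i$, using prime avoidance in a graded form. Throughout, $d=\dim(S/I)$. The inductive goal is to build homogeneous $f_1,\ldots,f_i$ with
\[
\dim(S/(I,f_1,\ldots,f_i))=d-i
\]
for each $i$. Two standard facts will be used repeatedly. First, adding one element to an ideal drops the dimension by at most one (Krull's principal ideal theorem, applied in the quotient). Second, the dimension drops by exactly one when the element avoids every minimal prime $\mathfrak{p}$ of $J=(I,f_1,\ldots,f_{i-1})$ with $\dim(S/\mathfrak{p})=\dim(S/J)$.

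For the inductive step, suppose $\dim(S/J)=d-i+1>0$. Then no minimal prime of $J$ of maximal dimension equals $\mathfrak{m}$; these primes $\mathfrak{p}_1,\ldots,\mathfrak{p}_r$ are graded and properly contained in $\mathfrak{m}$. We need a homogeneous $f_i\in\mathfrak{m}$ outside every $\mathfrak{p}_k$. Graded prime avoidance supplies one: if $\mathfrak{m}\subseteq\bigcup_k\mathfrak{p}_k$ for homogeneous elements, then $\mathfrak{m}\subseteq\mathfrak{p}_k$ for some $k$, a contradiction. The homogeneous avoidance lemma gives an element of positive degree, not necessarily of degree $1$.

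If one wants an explicit construction, proceed as follows. Choose homogeneous $g_k\in\mathfrak{m}\setminus\mathfrak{p}_k$. We may assume the primes are pairwise incomparable, since all have the same dimension. Set
\[
h_k=\prod_{l\ne k}u_l\, g_k^{e_k},
\]
where each $u_l\in\mathfrak{p}_l\setminus\mathfrak{p}_k$ is homogeneous. Adjust the powers so that all $h_k$ have a common degree, and take $f_i=\sum_k h_k$. Then $f_i\notin\mathfrak{p}_k$ for every $k$, since $h_k\notin\mathfrak{p}_k$ while $h_l\in\mathfrak{p}_k$ for $l\ne k$. This equal-degree bookkeeping is the main technical obstacle: raising each summand to a suitable power attains a common degree $\operatorname{lcm}$-style without leaving the complement of the primes, because the primes are prime.

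The case $i=d$ follows directly. Then $\dim(S/(I,f_1,\ldots,f_d))=0$, so the only prime containing $(I,f_1,\ldots,f_d)$ is $\mathfrak{m}$, and hence its radical is $\mathfrak{m}$.

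For the infinite field case, we show $f_i$ can be taken linear. Each $\mathfrak{p}_k\cap S_1$ is a proper $K$-subspace of $S_1$, since otherwise $\mathfrak{p}_k\supseteq\mathfrak{m}$. A vector space over an infinite field is not a finite union of proper subspaces, so we may pick a linear form $f_i\in S_1$ outside all the $\mathfrak{p}_k$, and the same induction applies.
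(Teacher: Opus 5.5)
The paper gives no proof of this statement; it is quoted from \cite[Theorem~3.1.23]{monalg-3rd-edition}. Your argument is correct and is the standard proof of that result: induction on $i$, with Krull's principal ideal theorem for the lower bound and homogeneous prime avoidance against the top-dimensional minimal primes of $(I,f_1,\ldots,f_{i-1})$ for the upper bound, and, when $K$ is infinite, a linear form chosen outside the finitely many proper subspaces $\mathfrak{p}_k\cap S_1$.

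The one point to tighten is the degree bookkeeping in your explicit construction. The elements $u_l$ must depend on $k$ (say $u_{l,k}\in\mathfrak{p}_l\setminus\mathfrak{p}_k$). Varying the exponents $e_k$ alone cannot always equalize the $\deg h_k$, because $\deg h_k$ has a fixed residue modulo $\deg g_k$. Your stated remedy fixes this: replace each $h_k$ by $h_k^{N/\deg h_k}$ with $N=\operatorname{lcm}_k\deg h_k$. Alternatively, take each $g_k$ to be a variable not in $\mathfrak{p}_k$; such a variable exists because $\mathfrak{p}_k\neq\mathfrak{m}$.
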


\begin{theorem}\cite[Theorem 3.1.24]{monalg-3rd-edition}\label{noethern-grad}
Let $S$ be a polynomial ring over an infinite field $K$ and let $I$  
be a graded ideal of $S$. Then, there are 
linear forms $f_1,\ldots,f_d$ in $S$, 
with $d=\dim(S/I)$, and a natural embedding 
\[
A=K[f_1,\ldots,f_d]\stackrel{\varphi}{\hookrightarrow} S/I 
\] 
such that $S/I$ is a finitely generated $A$-module. 
\end{theorem}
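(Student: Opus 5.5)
Let $d=\dim(S/I)$. By Theorem~\ref{hsop}, when $K$ is infinite we may choose linear forms $f_1,\ldots,f_d$ with $\mathrm{rad}(I,f_1,\ldots,f_d)=\mathfrak{m}$; the aim is to show that the inclusion $A=K[f_1,\ldots,f_d]\to S/I$, $y_i\mapsto \bar f_i$, is injective and that $S/I$ is a finitely generated $A$-module.

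\textbf{Finiteness.} Put $B=S/I$ and let $\mathfrak{n}=(\bar f_1,\ldots,\bar f_d)B$. Since $\mathrm{rad}(I,f_1,\ldots,f_d)=\mathfrak{m}$, the ring $B/\mathfrak{n}B$ is a graded $K$-algebra of Krull dimension $0$. Being finitely generated, it is then a finite-dimensional $K$-vector space. Choose homogeneous $h_1,\ldots,h_r\in B$ whose images span $B/\mathfrak{n}B$ over $K$. By the graded Nakayama lemma, proved by induction on degree, $B=\sum_j A h_j$. The induction runs as follows. Given a homogeneous $b$, write
\[
b=\sum c_j h_j+\sum \bar f_i b_i
\]
with each $b_i$ homogeneous of degree $\deg b-1$. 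Applying the induction hypothesis to each $b_i$ expresses $b$ in $\sum_j A h_j$. Hence $B$ is a finitely generated $A$-module, and therefore integral over the image of $A$.

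\textbf{Injectivity.} Let $\psi\colon K[y_1,\ldots,y_d]\to B$ be the $K$-algebra map with $y_i\mapsto\bar f_i$; its image is $K[\bar f_1,\ldots,\bar f_d]$. Since $B$ is integral over the image of $\psi$, we have
\[
\dim \psi(K[y]) = \dim B = d.
\]
The ring $\psi(K[y])\cong K[y_1,\ldots,y_d]/\ker\psi$ has dimension $d$, while any nonzero prime of the $d$-dimensional domain $K[y]$ would make the quotient have dimension less than $d$. Applying this to a minimal prime of $\ker\psi$ that realizes the dimension of the quotient, that prime must be $0$, which forces $\ker\psi=0$. Thus $\psi$ is injective and $K[f_1,\ldots,f_d]$, the subalgebra generated by the $\bar f_i$, is a polynomial ring. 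This gives the natural embedding $\varphi$.

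The step I expect to need the most care is the dimension-zero claim together with the degree-induction form of Nakayama, but both are standard. The existence of the linear forms is entirely supplied by Theorem~\ref{hsop}.
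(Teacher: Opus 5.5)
Your argument is correct and follows essentially the route the paper relies on. The paper does not reprove this statement but quotes \cite[Theorem~3.1.24]{monalg-3rd-edition}, and the proof of Proposition~\ref{sop-noether} indicates that this argument likewise starts from a linear system of parameters supplied by Theorem~\ref{hsop}, then obtains both the finiteness of $S/I$ over $K[f_1,\ldots,f_d]$ and the injectivity of $\varphi$ from ${\rm rad}(I,f_1,\ldots,f_d)=\mathfrak{m}$ and $d=\dim(S/I)$. Your graded Nakayama induction and the dimension count $\dim K[\bar f_1,\ldots,\bar f_d]=\dim(S/I)=d$, which forces $\ker\psi=0$, are the standard ways to carry out those two steps.
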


\begin{definition}\label{noether-def}\rm
Let $S$ be a polynomial ring over a
field $K$ and $I$ a graded ideal of $S$. A {\it Noether
normalization\/} of 
$S/I$ is
an integral extension 
$K[f_1,\ldots,f_d]\hookrightarrow S/I$, 
where $f_1,\ldots,f_d$ are
homogeneous polynomials in $S$ and $d=\dim(S/I)$.
\end{definition}

Noether normalizations and systems of parameters are equivalent
notions. 

\begin{proposition}\label{sop-noether} Let $S=K[x_1,\ldots,x_n]$ be a
polynomial ring over a field $K$ and
let $I$  be a graded ideal of $S$. A set of homogeneous polynomials
$\underline{f}=\{f_1,\ldots,f_d\}$ is a system of parameters for $S/I$
if and only if the canonical map 
\[
A=K[f_1,\ldots,f_d]\stackrel{\varphi}{\longrightarrow} S/I 
\] 
is an embedding and $S/I$ is a finitely generated $A$-module. 
\end{proposition}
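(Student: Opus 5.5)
We prove the two implications separately. Write $R=S/I$, let $\bar f_i$ be the image of $f_i$ in $R$, and set $A=K[\bar f_1,\ldots,\bar f_d]\subseteq R$. The key fact, used in both directions, is the graded Nakayama lemma: a finitely generated graded $R$-module $M$ is finitely generated over $A$ if and only if $M/(\bar f_1,\ldots,\bar f_d)M$ is a finite-dimensional $K$-vector space.

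For the direction ($\Rightarrow$), assume $\underline{f}$ is a system of parameters, so ${\rm rad}(I,f_1,\ldots,f_d)=\mathfrak{m}$. Then $R/(\bar f_1,\ldots,\bar f_d)R=S/(I,f_1,\ldots,f_d)$ is artinian. It is standard graded, so it is a finite-dimensional $K$-vector space. By graded Nakayama, homogeneous lifts of a $K$-basis generate $R$ as an $A$-module, so $R$ is finite over $A$. Finiteness gives $\dim A=\dim R=d$. Since $A$ is generated by $d$ elements, the surjection $K[t_1,\ldots,t_d]\to A$ goes from a $d$-dimensional domain onto a ring of dimension $d$. Its kernel must therefore be $0$, so $\varphi$ is injective and $A$ is a polynomial ring on the $f_i$.

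For the direction ($\Leftarrow$), assume $\varphi$ is an embedding and $R$ is a finitely generated $A$-module. Then $R$ is integral over $A\cong K[t_1,\ldots,t_d]$, so $\dim R=\dim A=d$. To get the radical condition, take $\mathfrak{q}\supseteq(I,f_1,\ldots,f_d)$ prime in $S$. By lying over and incomparability for the integral extension $A\subseteq R$, the prime $\mathfrak{q}/I$ contracts to a prime of $A$ containing $A_+=(\bar f_1,\ldots,\bar f_d)$, which is maximal. Hence $\mathfrak{q}/I$ is maximal in $R$. Alternatively and more concretely: $R/A_+R$ is a finite module over $A/A_+=K$, hence finite-dimensional, hence artinian, so its only prime is the image of $\mathfrak{m}$. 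In either form, ${\rm rad}(I,f_1,\ldots,f_d)=\mathfrak{m}$, which is the definition in the introduction, and it agrees with Definition~\ref{sop-def}.

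The main obstacle is the injectivity of $\varphi$ in ($\Rightarrow$), which needs the dimension count that a proper quotient of $K[t_1,\ldots,t_d]$ has dimension less than $d$. Whether $\varphi$ refers to the map from the polynomial ring or from the subring is a matter of reading; we interpret ``embedding'' as the $f_i$ being algebraically independent modulo $I$. Everything else is graded Nakayama plus standard integrality facts.
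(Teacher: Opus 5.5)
Your proof is correct, but it follows a different route from the paper in both directions.

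For ($\Rightarrow$), the paper gives no argument of its own and refers to the proof of \cite[Theorem~3.1.24]{monalg-3rd-edition}. You make this direction self-contained: graded Nakayama gives finiteness of $R$ over $A$ from $\dim_K S/(I,f_1,\ldots,f_d)<\infty$, and the dimension count then forces $\ker(K[t_1,\ldots,t_d]\to A)=0$.

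For ($\Leftarrow$), the paper argues variable by variable. It writes an integral equation for $\overline{x_i}$ over $A$, lifts it to $S$, and takes the degree-$m$ homogeneous component to get $x_i^m\in(I,f_1,\ldots,f_d)$ directly. You instead pass to $R/A_+R$, a finite module over $A/A_+=K$, or use lying over and incomparability. The paper's route is more elementary and produces explicit powers of the variables. Yours is more structural, and it also proves $\dim(S/I)=d$ from $A\cong K[t_1,\ldots,t_d]$ and integrality; the paper simply adds $d=\dim(S/I)$ as a hypothesis in ($\Leftarrow$).

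Your reading of ``embedding'' as algebraic independence of the $f_i$ modulo $I$ is the one the statement needs. For $f_1=f_2=x_1$ in $K[x_1]$ with $I=0$, the subring inclusion is finite, but $d=2\neq 1=\dim(S/I)$.

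Two small points to tighten:
\begin{itemize}
\item Graded Nakayama needs $A_0=K$, that is, the $f_i$ must lie in $\mathfrak{m}$. This holds because ${\rm rad}(I,f_1,\ldots,f_d)=\mathfrak{m}\neq S$, and you should say so.
\item In the lying-over version, knowing that every prime containing $(I,f_1,\ldots,f_d)$ is maximal does not by itself give ${\rm rad}(I,f_1,\ldots,f_d)=\mathfrak{m}$. You also need that the minimal primes of this graded ideal are graded, and that $\mathfrak{m}$ is the only graded maximal ideal. Your concrete version avoids this step, since a finite-dimensional graded quotient of $S$ has nilpotent irrelevant ideal.
\end{itemize}
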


\begin{proof} 
$\Rightarrow$) Assume that $\underline{f}$ is a system of
parameters. Then, ${\rm rad}(I,f_1,\ldots,f_d)=\mathfrak{m}$,
$d=\dim(S/I)$, and this
implication follows from the proof of
\cite[Theorem~3.1.24]{monalg-3rd-edition}. 

$\Leftarrow$) Assume that the embedding $A\hookrightarrow S/I$ 
is a finite extension and $d=\dim(S/I)$. To show the equality 
${\rm rad}(I,f_1,\ldots,f_d)=\mathfrak{m}$ we need only show the
inclusion ``$\supset$'' because $I$ is graded. Take
$x_i\in\mathfrak{m}$. As $S/I$ is integral over $A$, $\overline{x_i}$
satisfies an equation of the form
$$
a_0+a_1\overline{x_i}+\cdots+a_{n-1}\overline{x_i}^{m-1}+\overline{x_i}^m=\overline{0},
$$
where $a_i\in A$ for all $i$ and $m\geq 1$. Thus, we get
$$a_0+a_1{x_i}+\cdots+a_{n-1}{x_i}^{m-1}
+{x_i}^m\in I,$$
 and since $I$ is graded, the $m$-th homogeneous component of
 this polynomial is in $I$. Therefore, $x_i^m\in(I,f_1,\ldots,f_d)$ and
 $x_i\in {\rm rad}(I,f_1,\ldots,f_d)$.
\end{proof}


\begin{proposition}
{\rm(Stanley, \cite[Proposition~3.1.26]{monalg-3rd-edition})}\label{stanley-sop}
Let $S=K[x_1,\ldots,x_n]$ be a polynomial ring over a field $K$ and 
let $I$ be a monomial ideal with $d=\dim(S/I)$. Then
\[
A=K[\sigma_1,\ldots,\sigma_d]{\hookrightarrow} S/I
\]
is a Noether normalization, where $\sigma_i=\sum_{1 \le j_1 < j_2 <
\dots < j_i \le n} x_{j_1} x_{j_2} \dots x_{j_i}$ is the $i$-{\rm th} 
symmetric polynomial.  
\end{proposition}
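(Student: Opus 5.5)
We prove Stanley's result by combining Proposition~\ref{sop-noether} with the fact that a monomial ideal and its radical have the same minimal primes. By Proposition~\ref{sop-noether}, it suffices to show two things: first, that $d=\dim(S/I)$ (which holds by hypothesis), and second, that
\[
{\rm rad}(I,\sigma_1,\ldots,\sigma_d)=\mathfrak{m}.
\]
Both ideals are graded, so only the inclusion $\mathfrak{m}\subset{\rm rad}(I,\sigma_1,\ldots,\sigma_d)$ needs proof. Equivalently, every prime $\mathfrak{q}$ containing $(I,\sigma_1,\ldots,\sigma_d)$ must equal $\mathfrak{m}$. It is also enough to work with $\bar K$-points: by the Nullstellensatz over the algebraic closure, the only common zero in $\bar K^n$ of $I$ and $\sigma_1,\ldots,\sigma_d$ should be the origin. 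Extending scalars does not change radicals in this sense, since $\mathfrak{m}$-primariness is detected by dimension $0$, and dimension is stable under field extension.

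\textbf{Step 1.} Let $p\in\bar K^n$ be a zero of $I$ and of $\sigma_1,\ldots,\sigma_d$, and let $T=\{j: p_j\neq 0\}$. Since $p$ kills every monomial of $I$, no monomial supported on $T$ lies in ${\rm rad}(I)$. Indeed, if $\prod_{j\in T}x_j^{m}\in I$, evaluating at $p$ would give a nonzero value. Hence the prime $\mathfrak{p}_T=(x_j: j\notin T)$ contains $I$: any monomial of $I$ must involve some $x_j$ with $j\notin T$, because otherwise it is nonzero at $p$. It follows that $|T|=\dim S/\mathfrak{p}_T\le\dim(S/I)=d$.

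\textbf{Step 2.} Consider the polynomial $\prod_{j=1}^n(t+p_j)=\sum_{i=0}^n\sigma_i(p)t^{n-i}$ with $\sigma_0=1$. Because $p$ has exactly $|T|\le d$ nonzero coordinates, $\sigma_i(p)=0$ for $i>|T|$, and by assumption $\sigma_i(p)=0$ for $1\le i\le d$. Thus every $\sigma_i(p)$ with $i\ge1$ vanishes. So $\prod_j(t+p_j)=t^n$, and by unique factorization in $\bar K[t]$ we get $p_j=0$ for all $j$, that is, $T=\emptyset$.

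Hence $V(I,\sigma_1,\ldots,\sigma_d)=\{0\}$ over $\bar K$, which gives radical $\mathfrak{m}$. Finally, Proposition~\ref{sop-noether} yields that $K[\sigma_1,\ldots,\sigma_d]\to S/I$ is injective and finite, i.e., a Noether normalization.

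The main obstacle is making the passage to $\bar K$ rigorous, since $K$ may be finite. The plan is to note that ${\rm rad}(J)=\mathfrak{m}$ if and only if $\dim S/J=0$, and that $\dim(S/J)=\dim(\bar K\otimes S/J)$. Hilbert's Nullstellensatz over $\bar K$ then reduces the question to the point computation in Steps 1 and 2.
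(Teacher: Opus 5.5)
Your proof is correct. The paper gives no argument of its own for this statement; it is quoted from \cite[Proposition~3.1.26]{monalg-3rd-edition}. So the relevant comparison is with the usual algebraic argument, which runs parallel to the $\Leftarrow$ part of the proof of Theorem~\ref{rank-criterion}.

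In that argument, let $\mathfrak{p}$ be a minimal prime of $I$ and $F$ the set of variables not in $\mathfrak{p}$. Then $|F|\le d$, and modulo $\mathfrak{p}$ each $\sigma_i$ becomes the $i$-th elementary symmetric polynomial $e_i(F)$ of the variables in $F$. Substituting $t=x_k\in F$ into $\prod_{x\in F}(t-x)=\sum_{i=0}^{|F|}(-1)^ie_i(F)t^{|F|-i}$ gives $x_k^{|F|}\in(e_1(F),\ldots,e_{|F|}(F))$. Hence ${\rm rad}(\mathfrak{p},\sigma_1,\ldots,\sigma_d)=\mathfrak{m}$ for every minimal prime. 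Since every prime containing $(I,\sigma_1,\ldots,\sigma_d)$ contains some minimal prime of $I$, it follows that ${\rm rad}(I,\sigma_1,\ldots,\sigma_d)=\mathfrak{m}$.

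Your Steps 1 and 2 are the pointwise form of this argument. Your support $T$ plays the role of $F$, since $(x_j: j\notin T)$ is a prime containing $I$. Your identity $\prod_j(t+p_j)=\sum_i\sigma_i(p)t^{n-i}$ is the same polynomial identity evaluated at a point.

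The algebraic version works verbatim over any field. It therefore needs neither the Nullstellensatz nor the descent from $\bar K$ to $K$, which you only sketch. Your version, in exchange, makes the geometry transparent. If you keep your route, the descent is quickest done directly. Since $\bar K$ is free over $K$ with a basis containing $1$, one has $J\bar K[x_1,\ldots,x_n]\cap S=J$. So $x_i^N\in J\bar K[x_1,\ldots,x_n]$ already gives $x_i^N\in J$, with no dimension theory needed.

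Step 2 can also be shortened. If $T\neq\emptyset$, then $\sigma_{|T|}(p)=\prod_{j\in T}p_j\neq 0$ with $1\le |T|\le d$, which contradicts the hypothesis $\sigma_{|T|}(p)=0$.
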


For convenience we define regular sequences in a way that can be used
to check this property using \textit{Macaulay}$2$ \cite{mac2}
(Example~\ref{example-comp-Grotzsch-graph}), cf. 
\cite[Definition~2.3.4]{monalg-3rd-edition}.

\begin{definition}\label{regular-def} Let $I\subset S$ be a graded
ideal. A sequence
$\underline{f}=\{f_1,\ldots,f_d\}$ of forms is called a \textit{regular sequence} on
$S/I$ if $((I,f_1,\ldots,f_i)\colon f_{i+1})=(I,f_1,\ldots,f_i)$ for
$i=0,\ldots,d-1$, where we let $f_0=0$ by convention.
\end{definition}

Systems of parameters are closely related to Cohen--Macaulay rings.

\begin{proposition}
\cite[Proposition~3.1.20]{monalg-3rd-edition}\label{cm-iff-sop}
Let $I\subset S$ be a graded ideal and let
$\underline{f}=\{f_1,\ldots,f_d\}$ be a homogeneous system of
parameters for $S/I$. Then,   
$S/I$ is Cohen--Macaulay if and only if $\underline{f}$ is a
regular sequence on $S/I$.
\end{proposition}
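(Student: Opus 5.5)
We will prove both implications directly from Definition~\ref{regular-def}, which in this setting is the standard notion of a regular sequence. By Definition~\ref{sop-def}, $d=\dim(S/I)$ and ${\rm rad}(I,f_1,\ldots,f_d)=\mathfrak{m}$, so $S/(I,\underline{f})$ has finite length. Since everything is graded, depth and dimension of $S/I$ can be computed at the localization at $\mathfrak{m}$, and $S/I$ is Cohen--Macaulay exactly when ${\rm depth}(S/I)=\dim(S/I)=d$.

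For the implication ``$\Leftarrow$'', suppose $\underline{f}$ is a regular sequence on $S/I$. Each colon condition $((I,f_1,\ldots,f_i)\colon f_{i+1})=(I,f_1,\ldots,f_i)$ says that $f_{i+1}$ is a nonzerodivisor on $S/(I,f_1,\ldots,f_i)$. This quotient is nonzero because all $f_j$ lie in $\mathfrak{m}$. Hence ${\rm depth}(S/I)\geq d$. Since depth never exceeds dimension, ${\rm depth}(S/I)=d$, and $S/I$ is Cohen--Macaulay.

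For the implication ``$\Rightarrow$'', assume $S/I$ is Cohen--Macaulay and argue by induction on $d$, replacing $I$ by $(I,f_1)$ at each step.
\begin{itemize}
\item If $d=0$ there is nothing to prove.
\item Since $S/I$ is Cohen--Macaulay, it has no embedded primes, and every $\mathfrak{p}\in{\rm Ass}(I)$ satisfies $\dim(S/\mathfrak{p})=d$.
\item Suppose $f_1\in\mathfrak{p}$ for some such $\mathfrak{p}$. Then $\dim S/(I,f_1,\ldots,f_d)\geq \dim S/(\mathfrak{p},f_2,\ldots,f_d)\geq d-(d-1)=1$, by Krull's principal ideal theorem applied $d-1$ times. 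This contradicts ${\rm rad}(I,\underline{f})=\mathfrak{m}$.
\item Therefore $f_1$ avoids every associated prime, so $f_1$ is a nonzerodivisor on $S/I$, that is, $(I\colon f_1)=I$.
\item The ring $S/(I,f_1)$ is again Cohen--Macaulay, since a quotient by a homogeneous regular element of positive degree preserves the property. It has dimension $d-1$, and $\{f_2,\ldots,f_d\}$ satisfies ${\rm rad}((I,f_1),f_2,\ldots,f_d)=\mathfrak{m}$. So $\{f_2,\ldots,f_d\}$ is a system of parameters for $S/(I,f_1)$, and induction gives the remaining colon equalities.
\end{itemize}

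The main obstacle is the key step in ``$\Rightarrow$'': showing that $f_1$ lies in no associated prime. This is where unmixedness of Cohen--Macaulay rings and the dimension count via Krull's theorem are essential. The dimension estimate needs care, since it must hold for homogeneous elements of arbitrary degree, not only linear ones. Alternatively, one could simply cite the standard local result \cite[Theorem~2.1.2]{BHer} after localizing at $\mathfrak{m}$, using that for graded modules regularity on the localization is equivalent to graded regularity.
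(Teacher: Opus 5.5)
Your proof is correct. The paper gives no argument for this proposition; it only cites \cite[Proposition~3.1.20]{monalg-3rd-edition}. Your proof is the standard one behind that citation. The converse comes from $d\leq{\rm depth}(S/I)\leq\dim(S/I)=d$. For the forward direction, you use unmixedness of Cohen--Macaulay rings and the bound $\dim S/(\mathfrak{p},f_2,\ldots,f_d)\geq d-(d-1)$ to force $f_1$ outside every associated prime, and then induct on $S/(I,f_1)$. Your alternative of localizing at $\mathfrak{m}$ and quoting \cite[Theorem~2.1.2]{BHer} is equally valid.

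The step you flag as delicate is not really an obstacle. For a proper graded ideal $J$ one has $\dim S/J=\dim (S/J)_{\mathfrak{m}}$. In a Noetherian local ring $A$, $\dim A/(x)\geq\dim A-1$ for every $x$ in the maximal ideal, so the degrees of the $f_i$ play no role. Homogeneity is used only to transfer depth, dimension and regularity between $S/I$ and $(S/I)_{\mathfrak{m}}$.
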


\section{Linear algebra criterion for systems of parameters}

The next lemma is well-known and it depends on the field $K$. As is seen below, there is a 
related characterization of left-invertible matrices which is independent of
the field $K$.  

\begin{lemma}\label{pseudo-inverse} Let $K=\mathbb{R}$ be the field of
real numbers and let $B$ be
a matrix of size $d\times r$ with full column rank $r$. Then, 
$B^\top B$ is invertible and the rank of $B^\top B$ is $r$. 
\end{lemma}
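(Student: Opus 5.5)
The plan is to prove that $B^\top B$ has trivial null space. Since $B^\top B$ is an $r\times r$ real matrix, this shows it is invertible, and invertibility is equivalent to having rank $r$. So the lemma reduces to a single implication: if $v\in\mathbb{R}^r$ satisfies $B^\top Bv=0$, then $v=0$.

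To prove this implication, take $v\in\mathbb{R}^r$ with $B^\top B v=0$. Multiply on the left by $v^\top$ to get
$$
0=v^\top B^\top B v=(Bv)^\top(Bv)=\|Bv\|^2=\sum_{i=1}^d (Bv)_i^2 .
$$
This is where the hypothesis $K=\mathbb{R}$ is used. A sum of squares of real numbers is zero only if every term is zero, so $Bv=0$. Now use the hypothesis on $B$. Full column rank $r$ means the $r$ columns of $B$ are linearly independent, so the map $v\mapsto Bv$ is injective. Hence $v=0$.

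It follows that the linear map defined by $B^\top B$ on $\mathbb{R}^r$ is injective. By rank–nullity, or equivalently because a square matrix with trivial kernel is nonsingular, $B^\top B$ is invertible and ${\rm rank}(B^\top B)=r$.

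None of these steps is hard. The only point needing care is the sum-of-squares step, which relies on $\mathbb{R}$ being formally real. This is exactly why the lemma is field dependent, as the text preceding it warns. Over $\mathbb{F}_2$, for instance, $B=(1,1)^\top$ has full column rank but $B^\top B=(2)=(0)$. So the argument must invoke positive definiteness of the standard inner product and cannot be purely formal.
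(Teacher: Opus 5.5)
Your proof is correct and follows essentially the same route as the paper: you show $B^\top B$ is injective on $\mathbb{R}^r$ by writing $v^\top B^\top B v=\|Bv\|^2$, use that a real sum of squares vanishes only when every term does to get $Bv=0$, and then use linear independence of the columns of $B$ to conclude $v=0$. Your $\mathbb{F}_2$ example $B=(1,1)^\top$ is a correct illustration of why the hypothesis $K=\mathbb{R}$ is needed.
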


\begin{proof} It suffices to show that the linear operator $B^\top B\colon
K^r\rightarrow K^r$ is injective. Take $x\in K^r$ such that $(B^\top
B)x^\top=0^\top$. Then, 
$x(B^TB)x^\top=(Bx^\top)^\top Bx^\top=0$. Using that the product on
the left of $0$ is a sum of squares and the assumption that $K\subset
\mathbb{R}$, one has that $Bx^\top=0$. As 
$B$ has full column rank, its columns are linearly
independent over $K$. Then, noticing that  $Bx^\top$ is a linear combination of
the columns of $B$ and using the equality $Bx^\top=0$, we get that
$x=0$. This proves that $B^\top B$ is injective, and the proof is
complete.
\end{proof}

\begin{definition}\rm An $d\times r$ matrix $B$ with entries in a field $K$ is called
\text{left-invertible} if there is an $r\times d$ matrix $L$ such that
$LB=I_r$. 
\end{definition}

To give a linear algebra criterion for linear systems of parameters of
monomial ideal quotients, we need the following result that works for any field $K$.

\begin{theorem}{\rm(\cite[p.~130]{Gentle},
\cite[Theorem~157]{Santos})}\label{left-inverse}
Let $K$ be a field. A matrix $B\in\mathbb{M}_{d\times r}(K)$ is
left-invertible if and only if ${\rm rank}(B)=r$.    
\end{theorem}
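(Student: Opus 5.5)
The plan is to prove the two implications of Theorem~\ref{left-inverse} separately, using only elementary linear algebra over $K$. Throughout, think of $B\in\mathbb{M}_{d\times r}(K)$ as the linear map $K^r\to K^d$, $v\mapsto Bv$. Note that the pseudo-inverse construction of Lemma~\ref{pseudo-inverse} is unavailable: over a general field $B^\top B$ may be singular, for instance when $B=(1,1)^\top$ over $\mathbb{F}_2$. So the left inverse has to be built without it.

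For the direct implication, suppose $LB=I_r$. Then
\[
r={\rm rank}(I_r)={\rm rank}(LB)\leq {\rm rank}(B)\leq r .
\]
Alternatively, $Bv=0$ gives $v=LBv=0$, so $B$ is injective. Either way ${\rm rank}(B)=r$.

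For the converse, assume ${\rm rank}(B)=r$. Then the columns $b_1,\ldots,b_r\in K^d$ are linearly independent, and in particular $r\leq d$. I will extend them to a basis $b_1,\ldots,b_r,b_{r+1},\ldots,b_d$ of $K^d$, which is possible over any field by the Steinitz exchange lemma. Let $P=(b_1\ \cdots\ b_d)\in\mathbb{M}_{d\times d}(K)$. This matrix is invertible, and $P^{-1}P=I_d$ gives $P^{-1}b_i=e_i$ for $i\leq r$. Define $L$ to be the $r\times d$ matrix formed by the first $r$ rows of $P^{-1}$. Then the $(i,j)$ entry of $LB$ is the $i$-th coordinate of $P^{-1}b_j=e_j$, which is $\delta_{ij}$, so $LB=I_r$.

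An equivalent and more concrete route, matching the minors viewpoint used later in the paper, is the following. Since ${\rm rank}(B)=r$, some $r\times r$ submatrix $B_J$, formed by the rows indexed by $J$ with $|J|=r$, is invertible. Set $L=B_J^{-1}E_J$, where $E_J$ is the $r\times d$ matrix selecting the rows in $J$. Then $LB=B_J^{-1}B_J=I_r$.

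There is no serious obstacle. The only point needing care is that the argument must not rely on any positivity or inner-product structure of $K$, which is why I use the basis extension (or the nonzero maximal minor) rather than $(B^\top B)^{-1}B^\top$.
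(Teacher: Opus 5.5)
Your proof is correct in both directions. The rank inequality, or the injectivity argument, handles left-invertible $\Rightarrow$ full column rank. Both of your constructions of $L$ are valid over an arbitrary field: taking the first $r$ rows of $P^{-1}$ after extending the columns to a basis of $K^d$, or setting $L=B_J^{-1}E_J$ for an invertible $r\times r$ row-submatrix $B_J$.

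There is no proof in the paper to compare against. The statement is simply quoted from Gentle and Santos, and the only related argument the paper writes out is Lemma~\ref{pseudo-inverse}, whose left inverse $(B^\top B)^{-1}B^\top$ needs $K\subset\mathbb{R}$. Your example $B=(1,1)^\top$ over $\mathbb{F}_2$ shows exactly why the paper must invoke the field-independent statement rather than that lemma. Your proof fills in that citation in a self-contained way.

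It is also worth noticing how the two directions are used in Theorem~\ref{rank-criterion}. The easy direction suffices for the ``only if'' part, where a left inverse of $A_\mathfrak{p}$ is read off directly by writing each $x_j\notin\mathfrak{p}$ as a $K$-linear combination of the $\overline{f_i}$. The ``if'' part genuinely needs the existence of $L$, which is exactly what your basis-extension (or nonzero maximal minor) construction supplies. The minor-based version also matches how the paper later certifies full column rank through nonvanishing Vandermonde and MDS minors.
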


We come to one of our main results.

\begin{theorem}\label{rank-criterion}
Let $S=K[x_1,\ldots,x_n]$ be a polynomial ring over a field $K$, let
$I$ be a monomial ideal of $S$, and let $d= \dim(S/I)$. A set $\{f_1,\ldots,f_d\}$,
$f_i=\sum_{j=1}^na_{i,j}x_j$, $a_{i,j}\in K$, $i=1,\ldots,d$, is a system of
parameters for $S/I$ if and only if for each minimal prime
$\mathfrak{p}$ of $I$ the submatrix 
$$
A_\mathfrak{p}:=(a_{i,j})_{x_j\notin\mathfrak{p}}
$$
of size $d\times(n-{\rm ht}(\mathfrak{p}))$ of the $d\times n$ matrix
$A:=(a_{i,j})$ 
has full column rank. 
\end{theorem}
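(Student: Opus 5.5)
Since $(I,f_1,\ldots,f_d)$ is graded, the condition ${\rm rad}(I,f_1,\ldots,f_d)=\mathfrak{m}$ holds if and only if $\mathfrak{m}$ is the only prime containing $(I,f_1,\ldots,f_d)$. Any prime containing $I$ contains a minimal prime $\mathfrak{p}$ of $I$. Hence the s.o.p. condition is equivalent to the following: for every minimal prime $\mathfrak{p}$ of $I$, one has ${\rm rad}(\mathfrak{p},f_1,\ldots,f_d)=\mathfrak{m}$. To see this, note ${\rm rad}(I)=\bigcap\mathfrak{p}$, so $\bigcap_{\mathfrak p}{\rm rad}(\mathfrak{p},\underline f)={\rm rad}(I,\underline f)$. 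The inclusion ``$\supseteq$'' is clear. For ``$\subseteq$'', a prime $Q\supseteq(I,\underline f)$ contains some minimal $\mathfrak{p}$, hence contains $(\mathfrak p,\underline f)$.

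The plan is therefore to reduce to a single minimal prime. Write $\mathfrak{p}=(x_j: x_j\in\mathfrak{p})$, which is generated by variables as noted in the introduction. Then
$$S/(\mathfrak{p},f_1,\ldots,f_d)\cong K[x_j: x_j\notin\mathfrak{p}]/(\bar f_1,\ldots,\bar f_d),$$
where $\bar f_i=\sum_{x_j\notin\mathfrak p}a_{i,j}x_j$ is obtained by setting the variables of $\mathfrak{p}$ to $0$. The coefficient matrix of the $\bar f_i$ is exactly $A_\mathfrak{p}$. An ideal generated by linear forms in a polynomial ring $T=K[y_1,\ldots,y_r]$, with $r=n-{\rm ht}(\mathfrak p)$, is a prime ideal whose height equals the dimension of the $K$-span of those forms, which is ${\rm rank}(A_\mathfrak p)$. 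Its radical equals the maximal ideal of $T$ if and only if it is that maximal ideal, i.e. if and only if the forms span $T_1$, i.e. ${\rm rank}(A_\mathfrak{p})=r$. That is full column rank. Equivalently, one can invoke Theorem~\ref{left-inverse}: with $LA_\mathfrak p=I_r$, the column vector $(\bar f_1,\ldots,\bar f_d)$ multiplied by $L$ recovers each $y_j$ as a combination of the $\bar f_i$. Conversely, if the rank is less than $r$, the linear ideal is a prime strictly smaller than $(y_1,\ldots,y_r)$, so it is not $\mathfrak{m}$-primary.

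Combining the two paragraphs proves the equivalence. The hypothesis $d=\dim(S/I)$ is built into both sides, since the number of forms is $d$ by assumption. The one point needing care is the first reduction: passing from ${\rm rad}(I,\underline f)=\mathfrak m$ to the conditions for each minimal prime. This relies on every prime over $(I,\underline f)$ containing a minimal prime of $I$, together with the fact that minimal primes of a monomial ideal are generated by variables. The rest is linear algebra, so I expect no real obstacle. Note also that no assumption on $|K|$ is used, consistent with the statement holding over any field.
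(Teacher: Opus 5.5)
Your proposal is correct and is essentially the paper's argument. Both reduce, via the fact that every prime over $(I,f_1,\ldots,f_d)$ contains a minimal prime $\mathfrak{p}$ of $I$, to checking that the prime ideal $(\mathfrak{p},f_1,\ldots,f_d)=(\mathfrak{p},\bar f_1,\ldots,\bar f_d)$ equals $\mathfrak{m}$, which happens exactly when the $\bar f_i$ span the variables outside $\mathfrak{p}$, i.e.\ when $A_\mathfrak{p}$ has full column rank. The differences are cosmetic: you package both directions as one chain of equivalences using the dimension of the row space, whereas the paper proves the two implications separately and phrases the linear algebra through left-invertibility (Theorem~\ref{left-inverse}).
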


\begin{proof} $\Rightarrow$) As $f_1,\ldots,f_d$ is a system of
parameters for $S/I$, one has 
$$
{\rm rad}(I,f_1,\ldots,f_d)=\mathfrak{m}.
$$
\quad Let $\mathfrak{p}$ be any minimal prime of $I$. We let 
$\overline{f_i}=\sum_{x_j\notin\mathfrak{p}}a_{i,j}x_j$ for 
$i=1,\ldots,d$. Then,
$$
(I,f_1,\ldots,f_d)\subset
(\mathfrak{p},f_1,\ldots,f_d)=(\mathfrak{p},\overline{f_1},\ldots,\overline{f_d})
\subset \mathfrak{m},
$$
and, noticing that $(\mathfrak{p},f_1,\ldots,f_d)$ is a prime ideal, 
taking radicals one has the equality
$$ 
(\mathfrak{p},\overline{f_1},\ldots,\overline{f_d})=\mathfrak{m}.
$$
\quad Then, for each $x_j\notin\mathfrak{p}$ we can write
$x_j=\sum_{i=1}^d b_{i,j}\overline{f_i}$, $b_{i,j}\in K$. Then, 
letting $B=(b_{i,j})$,
$\overline{x}=\sum_{x_j\notin\mathfrak{p}}x_je_j$,  and 
$\overline{f}=(\overline{f_1},\ldots,\overline{f_d})$,  we have
$\overline{x}^\top=B^\top\, \overline{f}^\top$
and since $\overline{f}^\top=A_\mathfrak{p}\overline{x}^\top$, we get
$$
\overline{x}^\top=B^\top\,\overline{f}^\top=B^\top(A_\mathfrak{p}\overline{x}^\top)
=(B^{\top} A_\mathfrak{p})\overline{x}^\top.
$$
\quad Hence, $B^{\top}A_\mathfrak{p}=I_{n-{\rm ht}(\mathfrak{p})}$ is the
identity matrix, that is, $A_\mathfrak{p}$ has left inverse. Therefore, 
by Theorem~\ref{left-inverse}, $A_\mathfrak{p}$ has full column rank. 

$\Leftarrow$) Let $P$ be any prime ideal of $S$ containing
$(I,f_1,\ldots,f_d)$. Then, there is a minimal prime ideal
$\mathfrak{p}$ of $I$ such that $\mathfrak{p}\subset P$ and
consequently 
$$
(I,f_1,\ldots,f_d)\subset (\mathfrak{p},f_1,\ldots,f_d)\subset P.
$$
\quad Letting $Q:=(\mathfrak{p},f_1,\ldots,f_d)$ it suffices to show
that $Q=\mathfrak{m}=(x_1,\ldots,x_n)$ because this will show that
$P=\mathfrak{m}$ and consequently the radical of $(I,f_1,\ldots,f_d)$
is $\mathfrak{m}$. We let 
$$ 
x=(x_1,\ldots,x_n),\ 
\overline{x}=\sum_{x_j\notin\mathfrak{p}}e_jx_j,\ f=(f_1,\ldots,f_d),\
\overline{f_i}=\sum_{x_j\notin\mathfrak{p}}a_{i,j}x_j,\ 
\overline{f}=(\overline{f_1},\ldots,\overline{f_d}),
$$ 
for $i=1,\ldots,d$. Note that
$Q=(\mathfrak{p},\overline{f_1},\ldots,\overline{f_d})$. From the
equality $Ax^\top=f^\top$, we get 
\begin{equation}\label{aug26-26}
A_\mathfrak{p}\overline{x}^\top=\overline{f}^\top.
\end{equation}
\quad Let $r=|\{x_j\mid x_j\notin\mathfrak{p}\}|$ be the number of 
columns of $A_\mathfrak{p}$. Since $A_\mathfrak{p}$ has full column rank, 
by
Theorem~\ref{left-inverse}, $A_\mathfrak{p}$ has a left-inverse, that
is, there is a $r\times d$ matrix $L$ such that $LA_\mathfrak{p}=I_r$
and consequently, multiplying Eq.~\eqref{aug26-26} by $L$, one has   
$$
\overline{x}^\top=I_r\overline{x}^\top=L\overline{f}^\top.
$$
\quad Therefore, $x_i\in(\overline{f_1},\ldots,\overline{f_d})$ for
any $x_i\notin\mathfrak{p}$. Recall that $\mathfrak{p}$ is generated
by a subset of $\{x_i\}_{i=1}^n$ because $I$ is a
monomial ideal. As $Q$ contains all variables in
$\mathfrak{p}$, one has $Q=\mathfrak{m}$. 
\end{proof}

\begin{remark}\label{sop-min-mono} The proof above shows that Theorem~\ref{rank-criterion} 
holds if $I$ is a graded ideal of $S$ whose associated primes are
monomial ideals, that is, they are generated by subsets of variables.
\end{remark}

\begin{corollary}\label{coro-I-radI}
Let $I$ be a monomial ideal of $S$. Then, $\underline{f}$ is a linear
system of parameters for $S/I$ if and only if $\underline{f}$ is a
linear system of parameters for $S/{\rm rad}(I)$.
\end{corollary}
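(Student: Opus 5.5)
The plan is to reduce everything to Theorem~\ref{rank-criterion}. That criterion depends only on two data: the integer $d=\dim(S/I)$ and the set of minimal primes of $I$. So it suffices to check that $I$ and ${\rm rad}(I)$ agree on both, and that ${\rm rad}(I)$ is again a monomial ideal, so that the theorem applies to it.

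First, ${\rm rad}(I)$ is a monomial ideal. Concretely, it is generated by the square-free parts of the minimal monomial generators of $I$; this is standard and in any case follows because ${\rm rad}(I)$ is the intersection of the minimal primes of $I$, each generated by variables.

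Second, the minimal primes coincide. A prime $P$ contains $I$ if and only if it contains ${\rm rad}(I)$, so the primes minimal over $I$ and over ${\rm rad}(I)$ are the same. These are exactly the minimal elements of ${\rm Ass}(I)$, respectively of ${\rm Ass}({\rm rad}(I))$, as defined in the introduction.

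Third, the dimensions agree. We have $\dim(S/I)=\max\{\dim(S/\mathfrak{p})\}$ over minimal primes $\mathfrak{p}$, and likewise for ${\rm rad}(I)$, so $\dim(S/I)=\dim(S/{\rm rad}(I))=d$.

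Now take a set $\underline{f}=\{f_1,\ldots,f_d\}$ of linear forms with coefficient matrix $A$. By Theorem~\ref{rank-criterion} applied to $I$, $\underline{f}$ is a system of parameters for $S/I$ if and only if $A_\mathfrak{p}$ has full column rank for every minimal prime $\mathfrak{p}$ of $I$. By the same theorem applied to ${\rm rad}(I)$, this is exactly the condition for $\underline{f}$ to be a system of parameters for $S/{\rm rad}(I)$. Note that a linear system of parameters must have exactly $d$ elements in either case, and $d$ is the same for both rings.

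There is no real obstacle. The only point needing care is the identification of minimal primes with the minimal elements of ${\rm Ass}$, which is the definition used in the paper. As a cross-check, one could also argue directly from ${\rm rad}(I,\underline{f})={\rm rad}({\rm rad}(I),\underline{f})$, which yields the corollary without invoking the rank criterion at all.
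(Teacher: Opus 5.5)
Your proposal is correct and matches the paper's proof: apply the rank criterion (Theorem~\ref{rank-criterion}) to both $I$ and ${\rm rad}(I)$, using that they are monomial ideals with the same dimension and the same minimal primes. Your closing cross-check via ${\rm rad}(I,\underline{f})={\rm rad}({\rm rad}(I),\underline{f})$ is also valid, and is more general, since it holds for arbitrary homogeneous systems of parameters and does not need the rank criterion.
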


\begin{proof} This follows from the rank criterion
(Theorem~\ref{rank-criterion}) by noticing that $I$ and ${\rm rad}(I)$
are monomial ideals with the same dimension and minimal primes. 
\end{proof}

The following result gives a Cohen--Macaulay criterion for monomial
ideals of dimension $2$ over a field of characteristic $0$. 

\begin{corollary}\label{coro-dim=2}
Let $I$ be a monomial ideal of $S=K[x_1,\ldots,x_n]$ over a field of
${\rm char}(K)=0$. If $d=\dim(S/I)=2$ and $f_1,f_2$ are the linear polynomials
$$
f_1=\sum_{i=1}^nix_i\ \mbox{ and }\ f_2=\sum_{i=2}^n(i-1)x_i,
$$
then $\{f_1,f_2\}$ is a system of parameters for $S/I$. Furthermore, $S/I$ is
a Cohen--Macaulay ring if and only if $\{f_1,f_2\}$ is a regular
sequence of $S/I$.
\end{corollary}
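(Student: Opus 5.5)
We apply the rank criterion (Theorem~\ref{rank-criterion}) with $d=2$. The coefficient matrix of $f_1,f_2$ is the $2\times n$ matrix $A$ whose $j$-th column is $(j,\,j-1)^\top$. The column for $x_1$ is $(1,0)^\top$.

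For each minimal prime $\mathfrak{p}$ of $I$ we must check that $A_\mathfrak{p}$ has full column rank. Since $\mathfrak{p}$ is generated by variables and $\dim(S/\mathfrak{p})\le\dim(S/I)=2$, the matrix $A_\mathfrak{p}$ has $n-{\rm ht}(\mathfrak{p})\in\{0,1,2\}$ columns. Note that $n-{\rm ht}(\mathfrak{p})=\dim(S/\mathfrak{p})$, and minimal primes need not all have maximal dimension.

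\textbf{Zero columns.} This happens only if $\mathfrak{p}=\mathfrak{m}$, which is a minimal prime only when $\dim(S/I)=0$. So it does not occur here, and in any case full column rank is vacuous.

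\textbf{One column.} The column $(j,j-1)^\top$ must be nonzero. This holds because $j\ne 0$ in $K$, since ${\rm char}(K)=0$ and $1\le j\le n$.

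\textbf{Two columns.} Take indices $j<k$. The relevant $2\times 2$ determinant is
$$
j(k-1)-k(j-1)=k-j,
$$
which is a nonzero integer, hence nonzero in $K$ because ${\rm char}(K)=0$. So the two columns are independent.

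By Theorem~\ref{rank-criterion}, $\{f_1,f_2\}$ is therefore a system of parameters for $S/I$. The Cohen--Macaulay statement then follows directly from Proposition~\ref{cm-iff-sop} applied to this homogeneous system of parameters.

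The main point is the determinant computation, which is where characteristic $0$ is used: we need both $j\neq 0$ and $k-j\neq 0$ in $K$ for integers $1\le j<k\le n$. In fact it suffices that ${\rm char}(K)=0$ or ${\rm char}(K)>n$. Apart from that, the only care needed is to note that minimal primes of dimension less than $2$ (one column) are covered by the nonvanishing of individual columns.
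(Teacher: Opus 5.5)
Your proposal is correct and is essentially the paper's proof: apply the rank criterion (Theorem~\ref{rank-criterion}), note that each $A_\mathfrak{p}$ has one or two columns, use that the columns are nonzero and that the $2\times 2$ minors equal $k-j\neq 0$, then invoke Proposition~\ref{cm-iff-sop}. One small refinement: the one-column case needs no hypothesis on $K$, since $j$ and $j-1$ cannot both vanish in $K$, so only ${\rm char}(K)=0$ or ${\rm char}(K)\geq n$ is actually needed, and only for the minors.
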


\begin{proof} The coefficients matrix of $\{f_1,f_2\}$ is given by
\[
A=
\begin{pmatrix}
1&2&3&\cdots&i&\cdots&n\\
0&1&2&\cdots&i-1&\cdots&n-1
\end{pmatrix}.
\]
\quad Let $\mathfrak{p}$ be a minimal prime of $I$. Since ${\rm
ht}(I)=n-2$, one has that ${\rm
ht}(\mathfrak{p})=n-2$ or ${\rm
ht}(\mathfrak{p})=n-1$. Then, $A_\mathfrak{p}$ is a $2\times 2$ matrix
or a $2\times 1$ matrix. Thus, by Theorem~\ref{rank-criterion}, 
it suffices to note that 
\[
\det\begin{pmatrix}
i&j\\
i-1&j-1
\end{pmatrix}=j-i.
\]
for all $j\neq i$ and that all columns of $A$ are non-zero. By
Proposition~\ref{cm-iff-sop}, 
$S/I$ is Cohen--Macaulay if and only if $\{f_1,f_2\}$ is a regular
sequence of $S/I$. 
\end{proof}

\begin{corollary}\cite[Remark, p.~150]{Stanley-sop}\label{sop-clutters}
Let $S=K[x_1,\ldots,x_n]$ be a polynomial ring over a field $K$, let
$I=I(\mathcal{C})$ be the edge ideal of a clutter $\mathcal{C}$, and
let $d$ be the dimension of $S/I$. A set $\{f_1,\ldots,f_d\}$,
$f_i=\sum_{j=1}^na_{i,j}x_j$, $a_{i,j}\in K$, $i=1,\ldots,d$, is a system of
parameters for $S/I$ if and only if for each maximal independent set of
vertices $F$ of $\mathcal{C}$ the $d\times|F|$ submatrix 
$$
A_F:=(a_{i,j})_{x_j\in F}
$$
of the $d\times n$ matrix $A:=(a_{i,j})$ has full column rank 
equal to $|F|$. 
\end{corollary}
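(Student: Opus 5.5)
The plan is to deduce the statement directly from the rank criterion (Theorem~\ref{rank-criterion}) by translating minimal primes of $I=I(\mathcal{C})$ into maximal independent sets of $\mathcal{C}$. The combinatorial input is the standard description of the minimal primes of an edge ideal. Since $I(\mathcal{C})$ is squarefree, hence radical, a prime $\mathfrak{p}=(x_j\mid x_j\in C)$ generated by variables contains $I(\mathcal{C})$ if and only if $C$ meets every edge, that is, $C$ is a vertex cover. Therefore the minimal primes of $I$ are exactly $\mathfrak{p}_C$ with $C$ a minimal vertex cover of $\mathcal{C}$.

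The first step is to record this bijection. A subset $C$ is a vertex cover if and only if $F=V(\mathcal{C})\setminus C$ is independent. Complementation reverses inclusion, so minimal vertex covers correspond to maximal independent sets. For such a pair, $x_j\notin\mathfrak{p}_C$ if and only if $x_j\in F$, so the submatrix $A_{\mathfrak{p}_C}$ of Theorem~\ref{rank-criterion} is precisely $A_F=(a_{i,j})_{x_j\in F}$. It has $|F|=n-{\rm ht}(\mathfrak{p}_C)$ columns, so ``full column rank'' means rank $|F|$.

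The second step is to apply Theorem~\ref{rank-criterion}. Its dimension hypothesis is the same $d=\dim(S/I)$, which equals $\beta_0(\mathcal{C})$ as recalled in the introduction. The theorem then says $\{f_1,\ldots,f_d\}$ is a system of parameters if and only if $A_{\mathfrak{p}}$ has full column rank for every minimal prime $\mathfrak{p}$. Running over all minimal vertex covers is the same as running over all maximal independent sets $F$, which gives exactly the stated condition.

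The only point needing care is the claim that minimal primes of $I$ are generated by variables and correspond to minimal vertex covers. That associated primes of a monomial ideal are generated by variables was noted in the introduction. Given this, a variable-generated prime $\mathfrak{p}_C$ contains the squarefree monomial $\prod_{x_i\in e}x_i$ if and only if $e\cap C\neq\emptyset$, and minimality is preserved under the bijection. There is no genuine obstacle beyond this bookkeeping.
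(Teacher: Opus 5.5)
Your proposal is correct and matches the paper's proof: both apply the rank criterion (Theorem~\ref{rank-criterion}) after identifying each minimal prime of $I(\mathcal{C})$ with $(C)$ for a minimal vertex cover $C$. The complement of such a $C$ is a maximal independent set $F$, so $A_{\mathfrak{p}}=A_F$. Your additional bookkeeping, that a variable-generated prime contains $\prod_{x_i\in e}x_i$ if and only if it contains some $x_i$ with $x_i\in e$, just spells out the identification that the paper states in one line.
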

\begin{proof} The result follows from Theorem~\ref{rank-criterion}, by
noticing a prime ideal $\mathfrak{p}$ of $S$ is a minimal prime of
$I(\mathcal{C})$ if and only if there is a minimal vertex cover $C$ of
$\mathcal{C}$ such that $\mathfrak{p}=(C)$ and 
$F=\{x_i\mid x_i\notin\mathfrak{p}\}$
is a maximal independent set of $\mathcal{C}$.
\end{proof}
The next result gives an explicit linear system of parameters for
$S/I$. 

\begin{theorem}\label{explicit-lsop}
Let $I$ be a monomial ideal of $S=K[x_1,\ldots,x_n]$ over a field $K$
with at least $n$ elements. If $d=\dim(S/I)$ and
$\lambda_1,\ldots,\lambda_n$ are distinct elements of $K$, then the
polynomials 
$$
f_i:=\sum_{j=1}^n\lambda_j^{i-1}x_j\ \mbox{ for }i=1,\ldots,d,
$$
form a linear system of parameters for $S/I$. Furthermore, $S/I$ is
a Cohen--Macaulay ring if and only if $\{f_1,\ldots,f_d\}$ is a 
regular sequence of $S/I$.
\end{theorem}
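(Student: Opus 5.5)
The plan is to apply the rank criterion (Theorem~\ref{rank-criterion}) directly to the coefficient matrix of $f_1,\ldots,f_d$. This is the $d\times n$ Vandermonde-type matrix
$$
A=(\lambda_j^{i-1})_{1\le i\le d,\ 1\le j\le n}.
$$
For a minimal prime $\mathfrak{p}$ of $I$, the submatrix $A_\mathfrak{p}$ consists of the columns indexed by $J=\{j\mid x_j\notin\mathfrak{p}\}$. We need this submatrix to have full column rank $r=|J|=n-{\rm ht}(\mathfrak{p})$.

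The first step is to check that $r\le d$. A minimal prime $\mathfrak{p}$ of $I$ is generated by variables, so $\dim(S/\mathfrak{p})=n-{\rm ht}(\mathfrak{p})=r$. Since $\mathfrak{p}\supset I$, we have $r=\dim(S/\mathfrak{p})\le\dim(S/I)=d$.

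The second step is to take the top $r$ rows of $A_\mathfrak{p}$. This gives the $r\times r$ matrix $(\lambda_j^{i-1})_{1\le i\le r,\ j\in J}$, which is a square Vandermonde matrix. Its determinant is $\prod_{j<k,\ j,k\in J}(\lambda_k-\lambda_j)$, and this is nonzero because the $\lambda_j$ are distinct elements of $K$. So $A_\mathfrak{p}$ has an invertible $r\times r$ minor, hence rank $r$, i.e.\ full column rank. If $r=0$ the condition is vacuous. By Theorem~\ref{rank-criterion}, $\{f_1,\ldots,f_d\}$ is then a system of parameters for $S/I$. The hypothesis that $K$ has at least $n$ elements is used exactly to guarantee that distinct $\lambda_1,\ldots,\lambda_n$ exist.

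The Cohen--Macaulay statement follows immediately from Proposition~\ref{cm-iff-sop}, since the $f_i$ are homogeneous (linear) and form a system of parameters. The only mildly delicate point is the inequality $r\le d$, which guarantees that the top $r$ rows exist; the rest is the standard Vandermonde determinant.
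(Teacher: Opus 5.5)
Your proposal is correct and follows the paper's proof essentially step for step. Both arguments take the Vandermonde coefficient matrix, bound $r=n-{\rm ht}(\mathfrak{p})\le d$, use the nonzero top $r\times r$ Vandermonde minor, apply the rank criterion, and invoke Proposition~\ref{cm-iff-sop} for the Cohen--Macaulay part. The only cosmetic difference is that you get $r\le d$ from $\dim(S/\mathfrak{p})\le\dim(S/I)$, whereas the paper uses ${\rm ht}(\mathfrak{p})\ge{\rm ht}(I)=n-d$.
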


\begin{proof} The coefficients matrix of $\{f_1,\ldots,f_d\}$ has size
$d\times n$ and is given by
\begin{equation}\label{vandermonde-matrix}
A =(a_{i,j})=
\begin{pmatrix}
1 & 1 & 1 & \cdots & 1 \\
\mathrm{\lambda}_1 & \mathrm{\lambda}_2 & \mathrm{\lambda}_3 & \cdots
& \mathrm{\lambda}_n \\ 
\mathrm{\lambda}_1^2 & \mathrm{\lambda}_2^2 & \mathrm{\lambda}_3^2 &
\cdots & \mathrm{\lambda}_n^2 \\ 
\vdots & \vdots & \vdots & & \vdots \\
\ \ \mathrm{\lambda}_1^{d-1} &\ \  \mathrm{\lambda}_2^{d-1} &
\ \ \mathrm{\lambda}_3^{d-1} & \cdots &\ \  \mathrm{\lambda}_n^{d-1} 
\end{pmatrix}.
\end{equation}
\quad For each choice of $r$ columns of $A$ indexed by
$1\leq i_1<\cdots<i_{r}\leq n$ such that $r\leq d$, the
resulting $d\times r$ matrix is given by
\[
A_{r,d}: =
\begin{pmatrix}
1 & 1 & 1 & \cdots & 1 \\
\mathrm{\lambda}_{i_1} & \mathrm{\lambda}_{i_2} &
\mathrm{\lambda}_{i_3} & \cdots
& \mathrm{\lambda}_{i_r} \\ 
\mathrm{\lambda}_{i_1}^2 & \mathrm{\lambda}_{i_2}^2 &
\mathrm{\lambda}_{i_3}^2 &
\cdots & \mathrm{\lambda}_{i_r}^2 \\ 
\vdots & \vdots & \vdots & & \vdots \\
\ \ \ \mathrm{\lambda}_{i_1}^{d-1} &\ \ \ \mathrm{\lambda}_{i_2}^{d-1} &
\ \ \ \mathrm{\lambda}_{i_3}^{d-1} & \cdots &\ \ \ \mathrm{\lambda}_{i_r}^{d-1} 
\end{pmatrix}.
\]
\quad This matrix has rank $r$, because picking the first $r$ rows of
$A_{r,d}$ gives an $r\times r$ Vandermonde submatrix $A_{r,r}$ and 
\[
\det(A_{r,r}) =
\det\begin{pmatrix}
1 & 1 & 1 & \cdots & 1 \\
\mathrm{\lambda}_{i_1} & \mathrm{\lambda}_{i_2} &
\mathrm{\lambda}_{i_3} & \cdots
& \mathrm{\lambda}_{i_r} \\ 
\mathrm{\lambda}_{i_1}^2 & \mathrm{\lambda}_{i_2}^2 &
\mathrm{\lambda}_{i_3}^2 &
\cdots & \mathrm{\lambda}_{i_r}^2 \\ 
\vdots & \vdots & \vdots & & \vdots \\
\ \ \ \mathrm{\lambda}_{i_1}^{r-1} &\ \ \ \mathrm{\lambda}_{i_2}^{r-1} &
\ \ \ \mathrm{\lambda}_{i_3}^{r-1} & \cdots &\ \ \
\mathrm{\lambda}_{i_r}^{r-1} 
\end{pmatrix}=\prod_{1\leq j<k\leq
r}(\lambda_{i_k}-\lambda_{i_j})\neq 0.
\]
\quad Hence, we have shown that for any $r\leq d$ every set of $r$
columns of $A$  is linearly independent. In particular, the rank of
$A$ is $d$. To prove that $\underline{f}=\{f_1,\ldots,f_d\}$ is a system of
parameters of $S/I$ we now use the rank criterion
(Theorem~\ref{rank-criterion}). Let $\mathfrak{p}$ be a minimal prime of $I$. Since ${\rm
ht}(I)=n-d$, one has that 
${\rm
ht}(\mathfrak{p})\geq n-d$ and $r:=n-{\rm ht}(\mathfrak{p})\leq d$.
Then, since any set of $r\leq d$ columns of $A$ is linearly
independent, the $d\times r$ submatrix 
$$
A_\mathfrak{p}:=(a_{i,j})_{x_j\notin\mathfrak{p}}
$$
of $A:=(a_{i,j})$ has full column rank equal to $r$. Hence,
by the rank criterion, $\underline{f}$ is a system of parameters of
$S/I$. By Proposition~\ref{cm-iff-sop}, the ring 
$S/I$ is Cohen--Macaulay if and only if $\underline{f}$ is a regular
sequence of $S/I$. 
\end{proof}
It is well known that over finite field the coefficient matrix $A$ of
$\underline{f}$ given above in Eq.~\eqref{vandermonde-matrix} is
related to MDS linear codes  (Theorem~\ref{mds-minors}).
For convenience we recall this relation.

Let $K=\mathbb{F}_q$ be a finite field and let $C$ be an $[n,d,\delta]_q$ {\it linear
code} of {\it length}
$n$, {\it dimension} $d$, and \textit{minimum distance} $\delta$, that
is, $C$ is a linear subspace of $K^n$ with $d=\dim_K(C)$, and
$\delta=\delta(C)$ is the smallest number of nonzero entries in any
non-zero vector 
of $C$. The Singleton bound
for the minimum distance of $C$ is 
\cite[Theorem~7.10.6]{Huffman-Pless}:   
$$
1\leq \delta(C)\leq n - d + 1.
$$
\quad If equality holds, $C$ is called a \textit{maximum distance separable
code} (MDS code for short). 
One of the main problems in this area is the MDS
conjecture, see the works
of Ball and De~Beule \cite{Ball1,Ball2,Ball3}, the book of Dougherty
\cite[p.~284]{dougherty}, and references there in. In particular, the
conjecture says that an MDS linear over a prime field 
has length at most $n\leq q+1$ \cite[p.~284]{dougherty}. 

\begin{theorem}{\rm\cite[Corollary~2.2]{Ghorpade-Lachaud}}\label{mds-minors} 
Let $C$ be an $[n,d,\delta]_q$ linear code over $\mathbb{F}_q$ and let $A$ be
a $d\times n$ matrix whose rows form a basis of $C$. Then, $C$ is an
MDS linear code if and only if all
$d\times d$ minors of $A$ are nonzero. 
\end{theorem}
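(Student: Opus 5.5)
The approach is to translate the MDS condition into a statement about column independence of $A$ and then into a statement about weights of codewords. Write $C=\{uA\mid u\in K^d\}$, where $u$ ranges over row vectors; since the rows of $A$ form a basis of $C$, the map $u\mapsto uA$ is an isomorphism $K^d\to C$. For a vector $c\in C$ let $Z(c)$ denote the set of coordinates where $c$ vanishes. The key observation is that $c=uA$ with $u\neq 0$ vanishes on a set $J$ of coordinates exactly when $u$ lies in the left kernel of the $d\times|J|$ submatrix $A_J$ formed by the columns indexed by $J$. So the existence of nonzero codewords with many zeros is governed by the ranks of column submatrices of $A$.

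\emph{($\Leftarrow$).} Assume every $d\times d$ minor of $A$ is nonzero, and let $c=uA$ with $u\neq 0$. If $c$ had at least $d$ zero coordinates, choose a set $J$ of $d$ of them. Then $uA_J=0$ with $A_J$ a square matrix, so $\det A_J=0$, a contradiction. Hence every nonzero codeword has at most $d-1$ zeros, so its weight is at least $n-d+1$. Therefore $\delta(C)\geq n-d+1$, and the Singleton bound gives equality, so $C$ is MDS.

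\emph{($\Rightarrow$).} Assume $\delta(C)=n-d+1$, and suppose some $d\times d$ minor vanishes, say $\det A_J=0$ with $|J|=d$. Then the rows of $A_J$ are dependent, so there is $u\neq 0$ with $uA_J=0$. The codeword $c=uA$ is nonzero because the rows of $A$ are independent. It vanishes on $J$, so its weight is at most $n-d<\delta(C)$, a contradiction.

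The only point needing care is $c\neq 0$ in the forward direction, which uses that the rows of $A$ form a basis, hence have full row rank; otherwise the argument is direct. Note that $d\le n$ holds automatically since $A$ has rank $d$. As an aside, this argument also shows the equivalence with ``every $d$ columns of $A$ are linearly independent,'' which is exactly the column-rank property used in the rank criterion of Theorem~\ref{rank-criterion}: any $r\le d$ columns extend to $d$ columns and are therefore independent.
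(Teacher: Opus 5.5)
Your proof is correct and complete. Both directions rest on the observation that $uA$ vanishes on a coordinate set $J$ exactly when $u$ lies in the left kernel of $A_J$. In the forward direction you correctly use the full row rank of $A$ to get $uA\neq 0$, and in the reverse direction the Singleton bound closes the argument.

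The paper gives no proof of this statement; it only cites Ghorpade--Lachaud. Your argument therefore supplies a self-contained elementary justification. Your closing remark, that any $r\le d$ columns of $A$ are linearly independent, is exactly the consequence that the proof of Proposition~\ref{mds-sop} draws from the theorem. There it is obtained by completing $A_\mathfrak{p}$ to a $d\times d$ submatrix of $A$.
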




Given an $[n,d,\delta]_q$ linear code $C$ there is an $d\times n$
\textit{generator matrix} whose rows generate $C$ as a vector space
over $\mathbb{F}_q$. One can use
generator matrices of MDS codes to construct linear
systems of parameters of monomial ideals over finite fields. 

\begin{proposition}\label{mds-sop}
Let $A=(a_{i,j})$ be an $d\times n$, $1<d<n$, generator matrix 
of an $[n,d,n-d+1]_q$ linear code $C$ and let 
$f_i=\sum_{j=1}^na_{i,j}x_j$, $a_{i,j}\in \mathbb{F}_q$,
$i=1,\ldots,d$. Then, $\{f_1,\ldots,f_d\}$ is a linear system of parameters of
$S/I$ for any monomial ideal $I$ of
$S=\mathbb{F}_q[x_1,\ldots,x_n]$ of
height $n-d$. 
\end{proposition}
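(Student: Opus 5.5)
The plan is to reduce everything to the rank criterion (Theorem~\ref{rank-criterion}) and to read the needed column independence off the MDS property via Theorem~\ref{mds-minors}. The argument has the same shape as the proof of Theorem~\ref{explicit-lsop}, with the Vandermonde determinant replaced by the nonvanishing of the maximal minors of a generator matrix of an MDS code.

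First, the number of forms is right. Since ${\rm ht}(I)=n-d$, one has $\dim(S/I)=n-(n-d)=d$. So $\{f_1,\ldots,f_d\}$ has exactly $\dim(S/I)$ elements, as required in the rank criterion.

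Second, any $r\leq d$ columns of $A$ are linearly independent over $\mathbb{F}_q$.
\begin{itemize}
\item The rows of $A$ form a basis of $C$: they generate $C$, and $\dim C=d$ equals the number of rows.
\item Since $C$ is an $[n,d,n-d+1]_q$ code, it is MDS. By Theorem~\ref{mds-minors}, every $d\times d$ minor of $A$ is nonzero.
\item Take any $r\leq d$ columns indexed by $i_1<\cdots<i_r$. Because $d<n$, we can complete them to a set of $d$ column indices; concretely, add $d-r$ further indices from the remaining $n-r\geq d-r$ columns.
\item The resulting $d\times d$ submatrix is invertible, so its columns are linearly independent. Hence the chosen $r$ columns are independent, i.e.\ the $d\times r$ submatrix has full column rank $r$.
\end{itemize}

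Third, apply this to the minimal primes. Let $\mathfrak{p}$ be a minimal prime of $I$. Then ${\rm ht}(\mathfrak{p})\geq {\rm ht}(I)=n-d$, so the submatrix $A_\mathfrak{p}=(a_{i,j})_{x_j\notin\mathfrak{p}}$ has $r=n-{\rm ht}(\mathfrak{p})\leq d$ columns. By the previous step $A_\mathfrak{p}$ has full column rank, and Theorem~\ref{rank-criterion} then shows that $\{f_1,\ldots,f_d\}$ is a system of parameters for $S/I$.

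There is no real obstacle. The only point needing care is the completion step in the second paragraph, which uses $r\leq d\leq n$ to extend a set of at most $d$ columns to exactly $d$ columns. The hypothesis $1<d$ is not used.
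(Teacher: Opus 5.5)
Your proposal is correct and follows essentially the same route as the paper: use $\dim(S/I)=d$ and ${\rm ht}(\mathfrak{p})\geq n-d$ to get at most $d$ columns in $A_\mathfrak{p}$, then enlarge them to a $d\times d$ submatrix whose determinant is nonzero by Theorem~\ref{mds-minors}, and conclude with the rank criterion (Theorem~\ref{rank-criterion}). You also check explicitly that the rows of $A$ form a basis of $C$, which Theorem~\ref{mds-minors} requires and the paper leaves implicit, and you correctly observe that the hypothesis $1<d$ is never used.
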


\begin{proof}
Let $I$ be a monomial ideal of $S$ of height $n-d$. Then,
$\dim(S/I)=d$. Let $\mathfrak{p}$ be a minimal prime of $I$ and let 
$
A_\mathfrak{p}:=(a_{i,j})_{x_j\notin\mathfrak{p}}
$
be the $d\times(n-{\rm ht}(\mathfrak{p}))$ submatrix of 
$A=(a_{i,j})$. Then, one has $n-{\rm ht}(\mathfrak{p})\leq n-{\rm
ht}(I)=d$, and we can adjoin columns to $A_\mathfrak{p}$ to form a
$d\times d$ submatrix $\overline{A}_\mathfrak{p}$ of $A$. Hence, by
MDS criterion (Theorem~\ref{mds-minors}),
$\det(\overline{A}_\mathfrak{p})\neq 0$. Thus, the columns of
$\overline{A}_\mathfrak{p}$ are linearly independent and so are the
columns of ${A}_\mathfrak{p}$. Therefore, $\overline{A}_\mathfrak{p}$
has full columns rank and, by the rank criterion
(Theorem~\ref{rank-criterion}), $\{f_1,\ldots,f_d\}$ is a linear system of parameters of
$S/I$.
\end{proof}

\section{Systems of parameters for edge rings of
graphs}\label{section-graphs}

\begin{definition}\label{0-1-sop-def} Let $I\subset S$ be a monomial
ideal. If $\underline{f}=\{f_1,\ldots,f_d\}$,
$f_i=\sum_{j=1}^na_{i,j}x_j$, $a_{i,j}\in K$, $i=1,\ldots,d$, is a system of parameters for
$S/I$ and the coefficient matrix $A=(a_{i,j})$ is a 0-1 matrix, we say that
$\underline{f}$ is a \textit{0-1 system of parameters} for $S/I$.
 \end{definition}

\begin{lemma}\label{sop-covers-vg} Let $\mathcal{C}$ be a clutter
without isolated vertices with vertex set
$V(\mathcal{C})=\{x_1,\ldots,x_n\}$ and let $\{f_1,\ldots,f_d\}$ be a linear system of parameters
of $S/I(\mathcal{C})$. If $I(\mathcal{C})\subset\mathfrak{m}^2$, then
any variable $x_k$ occurs in at least  
one $f_j$. In particular, the supports of 0-1 linear systems of parameters
of the edge ring $S/I(G)$ of a graph $G$ cover all vertices of $G$.
\end{lemma}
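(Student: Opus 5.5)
We plan to use Corollary~\ref{sop-clutters}, the clutter version of the rank criterion: $\{f_1,\ldots,f_d\}$ is a system of parameters for $S/I(\mathcal{C})$ exactly when, for every maximal independent set $F$ of $\mathcal{C}$, the columns of $A$ indexed by $F$ are linearly independent. In particular, every column of $A$ indexed by a vertex lying in some maximal independent set must be nonzero. So it suffices to show that every vertex $x_k$ lies in some maximal independent set; then column $k$ of $A$ is nonzero, that is, $a_{i,k}\neq 0$ for some $i$, and $x_k$ occurs in $f_i$.

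For the key step, fix $x_k$. The condition $I(\mathcal{C})\subset\mathfrak{m}^2$ says that every edge has at least two vertices, so no edge equals $\{x_k\}$. Hence $\{x_k\}$ is independent: the only subsets of $\{x_k\}$ are $\emptyset$ and $\{x_k\}$, and neither is an edge (the empty set is not an edge because edges generate $I(\mathcal{C})\subset\mathfrak{m}$). Since $V(\mathcal{C})$ is finite, $\{x_k\}$ extends to a maximal independent set $F$. Corollary~\ref{sop-clutters} then says that $A_F$ has full column rank $|F|$, so the column of $A$ indexed by $x_k$ is nonzero, as required. The hypothesis of no isolated vertices is used only to guarantee that $V(\mathcal{C})=\{x_1,\ldots,x_n\}$ is exactly the variable set, so that the corollary applies to every $x_k$. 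An isolated vertex in the clutter sense would in any case still lie in a maximal independent set, so this hypothesis is not essential to the argument.

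For the statement about graphs, note that if $G$ is a graph, then $I(G)$ is generated by the squarefree quadrics $x_ix_j$, so $I(G)\subset\mathfrak{m}^2$. The support of a 0-1 form $f_i$ is the set of $x_j$ with $a_{i,j}=1$. By the first part, every vertex $x_k$ has $a_{i,k}\neq0$ for some $i$, so $x_k$ lies in the support of some $f_i$, and the supports cover $V(G)$.

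No step is a serious obstacle. The only point requiring care is the independence of $\{x_k\}$, which is exactly where $I(\mathcal{C})\subset\mathfrak{m}^2$ enters: a loop-type edge $\{x_k\}$ would put $x_k$ in $I(\mathcal{C})$, so $x_k$ would lie in every minimal prime, and it would genuinely not need to appear in any $f_j$.
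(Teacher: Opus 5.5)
Your argument is correct, but it takes a different route from the paper's.

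\textbf{The paper's route.} The paper does not use the rank criterion. It works directly from ${\rm rad}(I(\mathcal{C}),f_1,\ldots,f_d)=\mathfrak{m}$ and writes $x_k^r=\sum_i g_ix^{\alpha_i}+\sum_i h_if_i$. It then sets $x_\ell=0$ for all $\ell\neq k$. If $x_k$ occurs in no $f_i$, every $f_i$ vanishes. Every generator $x^{\alpha_i}$ also vanishes, because it is squarefree of degree at least $2$ and so involves a variable other than $x_k$. This leaves the contradiction $x_k^r=0$.

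\textbf{Your route.} You instead read the lemma off Corollary~\ref{sop-clutters}. Since no edge is a singleton, $\{x_k\}$ is independent and extends to a maximal independent set $F$. Full column rank of $A_F$ then forces the column of $A$ indexed by $x_k$ to be nonzero.

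\textbf{Comparison.} Given Theorem~\ref{rank-criterion}, your proof is shorter. It also shows exactly what $I(\mathcal{C})\subset\mathfrak{m}^2$ is used for: it guarantees that $x_k$ avoids some minimal prime. The same reasoning shows that, for any monomial ideal $I$, every variable not in ${\rm rad}(I)$ must occur in any linear system of parameters of $S/I$. The paper's specialization argument is self-contained and uses only the definition of a system of parameters. It also does not need linearity. It applies verbatim to any homogeneous system of parameters of positive degree, and shows that some $f_j$ must contain a monomial involving $x_k$ (in fact a pure power of $x_k$).

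Your remark that the hypothesis of no isolated vertices is inessential is right. The paper's proof does not use it either.
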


\begin{proof} Since ${\rm
rad}(I(\mathcal{C}),f_1,\ldots,f_d)=\mathfrak{m}$, there is $r\geq 1$
such that $x_k^r\in(I(\mathcal{C}),f_1,\ldots,f_d)$. Then, 
$$
x_k^r=\sum_{i=1}^mg_i(x^{\alpha_i})+\sum_{i=1}^dh_if_i,
$$
where the $g_i$'s and $h_i$'s are in $S$, and the $x^{\alpha_i}$'s are
squarefree monomials in $I(\mathcal{C})$ of degree $\geq 2$. If $x_k$ does not 
occurs in any of the $f_i$'s, making $x_\ell=0$ for $\ell\neq k$ in the
equation above, we
get that $x_k^r=0$ because $I(\mathcal{C})\subset\mathfrak{m}^2$ and
each $x^{\alpha_i}$ contains a variable different from $x_k$, 
a contradiction.   
\end{proof}


\begin{definition}\label{clique}
Let $G$ be a graph. A \textit{clique} of $G$ is a subset $A$ of $V(G)$
such that the induced subgraph $G[A]$ is a complete graph. The {\it
clique number\/} of $G$, denoted by $\omega(G)$,  
is the size of the largest clique of $G$.
A {\it coloring\/} of the vertices of $G$ is an assignment
of colors to the vertices of $G$ in such a way that adjacent vertices
have distinct colors. The {\it chromatic
number\/} of $G$, denoted by
$\chi(G)$,
is the minimal number of colors in a 
coloring of $G$. 
\end{definition}

The clique number and the chromatic number are related by 
$\omega(G)\leq\chi(G)$. 

\begin{definition}\label{perfect-def}
A graph $G$ is {\it perfect\/} if 
$\omega(H)=\chi(H)$ for every induced subgraph $H$ of $G$. 
\end{definition}
Note that the chromatic number $\chi(G)$ of a graph $G$ is the minimum number of independent
sets needed in a partition of the vertex set $V(G)$ of $G$.  
To compute chromatic numbers we use the algebraic method of \cite{FHV}
and \textit{Macaulay}$2$ \cite{mac2}.  

\begin{proposition}\label{beta_0=2}
Let $G$  be a graph with $\beta_0(G)=2$ and vertex set $V(G)=\{x_1, \ldots, x_n\}$.
Then, $S/I(G)$ has a  0-1
linear system 
of parameters if and only if $V(G)$ has a partition in three or in
two cliques of $G$. In particular, if $\overline{G}$ is the complement
of $G$ and $\chi(\overline{G})>3$, then
$S/I(G)$ does not have a 0-1 linear system of parameters. 
\end{proposition}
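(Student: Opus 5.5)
The plan is to reduce everything to Corollary~\ref{sop-clutters} with $d=\beta_0(G)=2$, and then to analyse which $0$-$1$ columns can occur. A $0$-$1$ linear system of parameters $\{f_1,f_2\}$ has a coefficient matrix $A$ of size $2\times n$. Each column of $A$ is one of the four vectors
$$(0,0)^\top,\quad (1,0)^\top,\quad (0,1)^\top,\quad (1,1)^\top.$$
By Corollary~\ref{sop-clutters}, $\{f_1,f_2\}$ is a system of parameters if and only if, for every maximal independent set $F$ of $G$, the submatrix $A_F$ has full column rank $|F|$. Since $\beta_0(G)=2$, every maximal independent set has size $1$ or $2$.

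Two elementary facts, valid over any field $K$, do the work.
\begin{enumerate}
\item[(i)] A single column has rank $1$ if and only if it is nonzero.
\item[(ii)] Two nonzero $0$-$1$ columns are linearly independent if and only if they are distinct. Indeed, the only scalar relations among $(1,0)^\top,(0,1)^\top,(1,1)^\top$ are trivial, because each of the $2\times 2$ determinants formed from two distinct ones equals $\pm1\neq 0$.
\end{enumerate}
Every vertex lies in some maximal independent set. So the rank condition forces every column to be nonzero: this is immediate for a singleton $F$, and for $|F|=2$ it follows from full column rank. It also forces any two non-adjacent vertices to receive distinct columns.

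$\Rightarrow$) Given a $0$-$1$ system of parameters, I partition $V(G)$ into the three classes
$$C_1=\{x_j : \text{column } (1,0)^\top\},\quad C_2=\{x_j : \text{column } (0,1)^\top\},\quad C_3=\{x_j : \text{column } (1,1)^\top\}.$$
No column is zero, so these classes cover $V(G)$. Two vertices in the same class have equal columns, so by the observation above they must be adjacent. Hence each $C_i$ is a clique, and discarding empty classes gives a partition into three or two cliques. A partition into a single clique is impossible, since it would give $\beta_0(G)=1$.

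$\Leftarrow$) Given a partition $V(G)=C_1\cup C_2\cup C_3$ into cliques, with $C_3$ allowed to be empty in the two-clique case, I assign the columns $(1,0)^\top$, $(0,1)^\top$, $(1,1)^\top$ to the vertices of $C_1$, $C_2$, $C_3$ respectively. This means
$$f_1=\sum_{x_j\in C_1\cup C_3}x_j,\qquad f_2=\sum_{x_j\in C_2\cup C_3}x_j.$$
Every column is nonzero, so singleton maximal independent sets satisfy the rank condition. The two vertices of an independent pair cannot lie in a common clique, so they get distinct nonzero columns and $A_F$ has rank $2$ by (ii). Corollary~\ref{sop-clutters} then gives the result.

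For the last assertion, I note that a partition of $V(G)$ into cliques of $G$ is exactly a partition into independent sets of $\overline{G}$, that is, a proper coloring of $\overline{G}$. So the existence of such a partition into at most three parts means $\chi(\overline{G})\leq 3$, and $\chi(\overline{G})>3$ rules it out. The main (mild) obstacle is the bookkeeping that zero columns are excluded for every vertex, including isolated ones, and that (ii) holds independently of the characteristic of $K$. Both are handled by the determinant computation and by the observation that every vertex lies in some maximal independent set.
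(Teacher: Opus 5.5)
Your proof is correct and follows essentially the same route as the paper. Both reduce to Corollary~\ref{sop-clutters}. Both sort the vertices by their nonzero 0-1 column into the classes $A_1\setminus A_2$, $A_2\setminus A_1$, $A_1\cap A_2$, and show each class is a clique because two non-adjacent vertices form a maximal independent set whose columns must be distinct. For the converse, both use the same forms $f_1=\sum_{x_j\in C_1\cup C_3}x_j$ and $f_2=\sum_{x_j\in C_2\cup C_3}x_j$.

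The differences are minor. You exclude zero columns by noting that every vertex lies in a maximal independent set, rather than invoking Lemma~\ref{sop-covers-vg}. You also discard empty classes explicitly. This is slightly cleaner than the paper's remark that $A_1\setminus A_2$ and $A_2\setminus A_1$ are nonempty, since one can have $A_1\subsetneq A_2$; that case simply yields a partition into two cliques.
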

 \begin{proof}
$\Leftarrow$) By hypothesis, we have that $V(G)=B_1 \cup B_2$ or  $V(G)=B_1 \cup B_2 \cup B_3$, where 
$B_1, B_2, B_3$ are cliques of $G$.  We define the sets $A_1$ and
$A_2$ in the following way: 
\begin{align*}
&A_1=B_1 \mbox{ and }A_2=B_2 \mbox{ if } V(G)=B_1 \cup B_2,\\ 
&A_1=B_1 \cup B_3 \mbox{ and }A_2=B_2 \cup B_3\mbox{ if }V(G)=B_1 \cup B_2 \cup B_3. 
\end{align*}
\quad We let $f_i:=\sum_{x_j \in A_i}
x_j=\sum_{j=1}^na_{i,j}x_j$, i.e., 
$a_{i,j}=1$ if $x_j \in A_i$  and $a_{i,j}=0$ if $x_j \notin A_i$. 
We let $D$ be a maximal independent set of $G$ and consider the
$2 \times \vert D \vert$ submatrix 
$
A_D:=(a_{i,j})_{x_j\in D}
$
of the $2 \times n$ matrix $A:=(a_{i,j})$. If $\vert D \vert=1$, then
rank($A_D$)=1, 
since $V(G)=A_1 \cup A_2$.
Now, we can assume  $D=\{x_{i_1}, x_{i_2}\}$, since $\vert D \vert \leq  \beta_0(G)=2$. 
As $B_1, B_2, B_3$ are cliques, we can assume $x_{i_1} \in B_1$,
$x_{i_2} \in B_2$ or $x_{i_1} \in B_1$, 
$x_{i_2} \in B_3$. Thus, $A_D$ has one of the following forms:
\begin{align*}
\begin{matrix}
\; & x_{i_1} & x_{i_2}  \\
A_1 & 1 & 0 \\
A_2 & 0 & 1 \\
\end{matrix}
\quad \mbox{or} \quad
\begin{matrix}
\; & x_{i_1} & x_{i_2}\\
A_1 & 1 & 1  \\
A_2 & 0 & 1  \\
\end{matrix}
\end{align*}
Hence, rank($A_D$)=2. Therefore, by Corollary~\ref{sop-clutters},
$S/I(G)$ has a 
0-1 system 
of parameters.

$\Rightarrow$) There are $f_1,f_2\in S$ such that $f_i=\sum_{j=1}^na_{i,j}x_j$ with
$a_{i,j}\in \{0,1\}$ for $i=1, 2$, since  
$\dim(S/I(G))=\beta_0(G)=2$. We take $A_i=\{x_j \mbox{ } \vert
\mbox{ } a_{i,j}=1\}$. By Corollary~\ref{sop-clutters}, for each independent set $D$ of $G$
 rank($A_D$)$=\vert D \vert$, where $A_D:=(a_{i,j})_{x_j\in D}$. 
By Lemma~\ref{sop-covers-vg}, $A_1 \cup A_2 = V(G)$ and we
let $B_1=A_1 \setminus A_2$, 
 $B_2=A_2 \setminus A_1$ and $B_3=A_1 \cap A_2$. Note that $B_1 \neq
 \emptyset$ and $B_2 \neq \emptyset$ because $G$ is not a complete
 graph. Thus, it suffices to prove that  $B_1$
 and $B_2$ are cliques, and that $B_3$ is a clique or
 $B_3=\emptyset$. 
By contradiction suppose that $B_i$ is not a clique for 
some $i\in\{1,2\}$. Then, there
are $x_{j_1}$, $x_{j_2} \in B_i$ 
 such that $\{x_{j_1}, x_{j_2}\} \notin E(G)$. So,
 $D_1=\{x_{j_1}, x_{j_2}\} $ is 
a maximal independent set of $G$ and rank($A_{D_1}$)$=\vert D_1 \vert=2$.
Hence, $A_{D_1}$ has one of the following forms:
\begin{align*}
\begin{matrix}
\; & x_{j_1} & x_{j_2}  \\
A_1 & 1 & 1 \\
A_2 & 0 & 0 \\
\end{matrix}
\quad \mbox{or}\quad 
\begin{matrix}
\; & x_{j_1} & x_{j_2}\\
A_1 & 0 & 0  \\
A_2 & 1 & 1  \\
\end{matrix},
\end{align*}
a contradiction. To show that $B_3=\emptyset$ or $B_3=A_1\cap
A_2\neq\emptyset$ is a clique, we
argue by contradiction assuming there
are $x_{j_1}$, $x_{j_2} \in B_3$ 
 such that $\{x_{j_1}, x_{j_2}\} \notin E(G)$. 
Hence, $A_{D_1}$ has the form:
\begin{align*}
\begin{matrix}
\; & x_{j_1} & x_{j_2}\\
A_1 & 1 & 1  \\
A_2 & 1 & 1  \\
\end{matrix},
\end{align*} 
a contradiction. Therefore, $V(G)$ has a partition in three or in two cliques of $G$.
\end{proof}

\begin{proposition}\label{no-0-1-sop-prop} 
Let $G$ be the graph in
Example~\ref{example-comp-Grotzsch-graph}, 
Figure~\ref{comp-Grotzsch-graph}, and let $I(G)$ be its edge ideal. 
Then, $S/I(G)$ has no 0-1 linear system of parameters over any field $K$.
\end{proposition}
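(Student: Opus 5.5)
The graph $G$ is the complement of the Mycielski--Gr\"otzsch graph. The Gr\"otzsch graph is triangle-free on $11$ vertices with chromatic number $4$. Hence $G$ has independence number $\beta_0(G)=\omega(\text{Gr\"otzsch})=2$, and its complement $\overline{G}$ satisfies $\chi(\overline{G})=4>3$. The plan is to reduce the statement directly to Proposition~\ref{beta_0=2}, whose proof is field independent.

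First I will record the two combinatorial facts about the Gr\"otzsch graph $M$ on vertices $x_1,\ldots,x_{11}$.
\begin{enumerate}
\item[(i)] $M$ is triangle-free and has at least one edge. Hence the largest independent sets of $G=\overline{M}$, which are the cliques of $M$, have size $2$, so $\beta_0(G)=2$ and $\dim(S/I(G))=2$.
\item[(ii)] $\chi(M)=4$.
\end{enumerate}
Fact (i) is checked directly from the Mycielski construction applied to $C_5$: the new vertices $u_i$ are adjacent only to the neighbours of $v_i$ in $C_5$ and to $w$. A triangle cannot use $w$, since the $u_i$ are pairwise nonadjacent. A triangle cannot use two of the $u_i$, and a triangle using one $u_i$ would require an edge between two neighbours of $v_i$ in $C_5$, which does not exist. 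Finally, $C_5$ itself contains no triangle. Fact (ii) is the classical theorem of Mycielski that the construction raises the chromatic number by one, so $\chi(M)=\chi(C_5)+1=4$. Alternatively, it can be verified with the algebraic method of \cite{FHV} in \textit{Macaulay}$2$, as the paper indicates.

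Next I will translate fact (ii). A partition of $V(G)$ into cliques of $G$ is exactly a partition of $V(M)$ into independent sets of $M$, that is, a proper colouring of $M$. Since $\chi(\overline{G})=\chi(M)=4>3$, the vertex set $V(G)$ admits no partition into two or three cliques of $G$.

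The conclusion then follows from Proposition~\ref{beta_0=2}, specifically its ``in particular'' clause. One point needs checking: the argument there is valid over any field $K$. In the proof of the implication ``$\Rightarrow$'', only the following ingredients are used:
\begin{enumerate}
\item[(a)] Lemma~\ref{sop-covers-vg}, which is field independent and applies because $G$ has no isolated vertices. Indeed, $M$ has no vertex adjacent to all others, since its maximum degree is $5<10$.
\item[(b)] The rank of $2\times 2$ matrices with $0$-$1$ entries that have two equal columns or a zero row. Such matrices are singular over every field.
\end{enumerate}
Thus a $0$-$1$ system of parameters over any $K$ would produce a partition of $V(G)$ into at most three cliques, which contradicts the previous step.

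The main obstacle is justifying $\chi(M)=4$ rigorously. I will cite Mycielski's theorem, or sketch its proof. Suppose $M$ had a proper $3$-colouring. Recolour each $v_i$ whose colour is $3$ with the colour of $u_i$, which is allowed because $u_i$ has the same neighbours in $C_5$ as $v_i$. Here colour $3$ is taken to be the colour of $w$, so that no $u_i$ has colour $3$. This yields a proper $2$-colouring of $C_5$, which is impossible.
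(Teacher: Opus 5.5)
Your proposal is correct and takes essentially the same route as the paper, whose proof simply observes that $\beta_0(G)=2$ and $\chi(\overline{G})=4$ and then applies Proposition~\ref{beta_0=2}. The extra checks you supply are details the paper leaves implicit: triangle-freeness of the Mycielski--Gr\"otzsch graph, the Mycielski bound $\chi\geq 4$, the absence of isolated vertices needed for Lemma~\ref{sop-covers-vg}, and the field-independence of the singularity of the relevant $2\times 2$ $0$-$1$ matrices.
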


\begin{proof} The independence number $\beta_0(G)$ of $G$ is $2$, and 
the complement $\overline{G}$ of $G$ is the Mycielski--Gr\"otzsch graph given in 
Figure~\ref{Grotzsch-graph} whose chromatic 
number is $4$. Then, by Proposition~\ref{beta_0=2}, 
$S/I(G)$ has no 0-1 linear system of parameters. 
\end{proof}

\begin{theorem}\label{perfect-sop}
Let $G$ be a graph with vertex set $V(G)=\{x_1,\ldots,x_n\}$, 
chromatic number $\chi(G)$, complement $\overline{G}$, and
independence number 
$d=\beta_0(G)=\dim(S/I(G))$.
The following hold.
\begin{enumerate}
\item[(a)] If $A_1,\ldots,A_d$ are cliques of $G$ that form a
partition of $V(G)$, then the set of polynomials $f_i=\sum_{x_j\in
A_i}x_j$, $i=1,\ldots,d$, form a 0-1 linear system of parameters of
$S/I(G)$. 
\item[(b)] If $G$ is a perfect graph, then $S/I(G)$ has a 0-1 linear
system of parameters coming from a partition of $V(\overline{G})$ into
$\chi(\overline{G})$ 
color classes of $\overline{G}$.
\end{enumerate}
\end{theorem}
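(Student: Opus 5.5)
For part (a) I will apply the clutter version of the rank criterion, Corollary~\ref{sop-clutters}, to the coefficient matrix $A=(a_{i,j})$ of $f_1,\ldots,f_d$. Here $a_{i,j}=1$ if $x_j\in A_i$ and $a_{i,j}=0$ otherwise. The number of polynomials is $d=\dim(S/I(G))=\beta_0(G)$, as the definition of a system of parameters requires.

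Let $F$ be a maximal independent set of $G$. Each $A_i$ is a clique, so $F$ meets each $A_i$ in at most one vertex. The $A_i$ partition $V(G)$, so each vertex of $F$ lies in exactly one $A_i$. Hence the column of $A_F$ indexed by $x_j\in F$ is the standard unit vector $e_{i}$, where $x_j\in A_i$. Distinct vertices of $F$ lie in distinct cliques, so these unit vectors are distinct. Therefore the columns of $A_F$ are $|F|$ distinct standard basis vectors of $K^d$, and $A_F$ has full column rank $|F|$. Corollary~\ref{sop-clutters} then shows that $\{f_1,\ldots,f_d\}$ is a system of parameters. This argument does not depend on the field $K$, and the system is 0-1 by construction.

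For part (b), I will reduce to (a) by producing a partition of $V(G)$ into exactly $d$ cliques. A color class of $\overline{G}$ is an independent set of $\overline{G}$, which is a clique of $G$. So a coloring of $\overline{G}$ with $\chi(\overline{G})$ colors partitions $V(G)$ into $\chi(\overline{G})$ cliques of $G$. It remains to show that $\chi(\overline{G})=d=\beta_0(G)$. Since $G$ is perfect, $\overline{G}$ is perfect by Lov\'asz's Perfect Graph Theorem. Therefore $\chi(\overline{G})=\omega(\overline{G})$. A clique of $\overline{G}$ is an independent set of $G$, so $\omega(\overline{G})=\beta_0(G)=d$. Part (a) then gives the 0-1 linear system of parameters.

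The main obstacle is this step, because it needs the Perfect Graph Theorem, which is not stated earlier in the paper. I would cite it as an external result. Applying perfection of $G$ directly only gives $\omega(G)=\chi(G)$, which is the wrong invariant. Once $\chi(\overline{G})=\beta_0(G)$ is established, the rest of the proof is the routine rank computation in part (a).
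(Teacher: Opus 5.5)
Your proof is correct and follows the paper's argument essentially step for step. In (a) the paper likewise notes that every column of the coefficient matrix is a unit vector and that independence of $F$ puts distinct vertices of $F$ in distinct cliques, then applies Corollary~\ref{sop-clutters}; in (b) it likewise invokes the Perfect Graph Theorem (citing Chv\'atal and Lov\'asz) to get $\chi(\overline{G})=\omega(\overline{G})=\beta_0(G)=d$ and applies (a) to the color classes of $\overline{G}$.
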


\begin{proof} (a) Let $A=(a_{i,j})$, $a_{i,j}\in\{0,1\}$, be the $d\times n$ coefficient
matrix of $\underline{f}=\{f_1,\ldots,f_d\}$, let
$F=\{x_{i_1},\ldots,x_{i_r}\}$ be a maximal independent set of $G$ with
$r$ vertices, and let $A_F:=(a_{i,j})_{x_j\in F}$ be the $d\times r$
submatrix of $A$ whose columns are indexed by $i_1,\ldots,i_r$. 
By Corollary~\ref{sop-clutters}, we need only show that $A_F$ has full
column rank equal to $r$. Each column of $A$ is equal to $e_{\ell}^\top$
for some unit vector $e_{\ell}\in K^d$ because $A_1,\ldots,A_d$ form a
partition of $V(G)$. Thus, for each $i_j$ there is $k_j$ such that 
$a_{k_j,i_j}=1$ and $a_{m,i_j}=0$ for $m\neq k_j$. Note that if $i_j\neq i_p$,
then $k_j\neq k_p$. Indeed, if $k_j=k_p$, then $a_{k_j,i_j}=1$ and
$a_{k_p,i_p}=1$ are in the $k_j$-th row of $A_F$, and then $x_{i_j}$, 
$x_{i_p}$ are in the clique $A_{k_j}$, that is, $x_{i_j}$, 
$x_{i_p}$ are adjacent, a contradiction since $F$ is an independent
set of $G$. Therefore, the columns of $A_F$ are linearly independent and
${\rm rank}(A_F)=r$.

(b) The complement of $G$ is a perfect graph \cite{chvatal,lovasz}. Then,
$\chi(\overline{G})=\omega(\overline{G})=\beta_0(G)=d$. Then, there
are independent sets $A_1,\ldots,A_d$ of $\overline{G}$, which are the
color classes of $\overline{G}$, that form a partition of
$V(\overline{G})$. Thus, $A_1,\ldots,A_d$ are cliques of $G$ that form
a partition of $V(G)$. Hence, by part (a), the set of polynomials $f_i=\sum_{x_j\in
A_i}x_j$, $i=1,\ldots,d$, form a 0-1 linear system of parameters of
$S/I(G)$.
\end{proof}

Let $G$ be a graph. The {\it vertex
covering number\/} of $G$, denoted $ \alpha_0(G)$,  is the number of
vertices in any smallest
vertex cover of $G$. A set of edges in $G$ is called
{\it independent} or a {\it matching} if no two of them have a vertex
in common.  The {\it matching number\/} of $G$, denoted $\beta_1{(G)}$, is the 
number of edges in any largest independent set of edges. 

\begin{definition}\label{konig-def}
A graph $G$ is 
called a \textit{K\"onig graph} if its matching number $\beta_1(G)$ is
equal to its covering number $\alpha_0(G)$. 
\end{definition}


\begin{lemma}\label{konig-partition}
Let $G$ be a K\"onig graph. Then, its vertex set has a
partition
\begin{equation}\label{sep13-26}
V(G)=\{x_1,\ldots,x_n\}=\displaystyle(\cup_{i=1}^{\alpha_0(G)}A_i)\cup(\cup_
{x_i\notin \cup_{i=1}^{\alpha_0(G)}A_i}\{x_i\}),
\end{equation}
into $\beta_0(G)$ cliques, where $A_1,\ldots,A_{\alpha_0(G)}$ are the edges of a 
maximum matching of $G$. 
\end{lemma}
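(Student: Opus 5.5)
Let $M=\{A_1,\ldots,A_{\alpha_0(G)}\}$ be a maximum matching of $G$, so $|M|=\beta_1(G)=\alpha_0(G)$. First I check that the sets in Eq.~\eqref{sep13-26} form a partition of $V(G)$ into cliques. The edges $A_i$ of a matching are pairwise disjoint. Each edge is a clique of size $2$, and each singleton is trivially a clique. The remaining vertices, those not covered by $M$, are taken as singletons. Hence the displayed union is a partition of $V(G)$ into cliques, and what is left is to count its blocks.

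The number of blocks is
$$
\alpha_0(G)+\bigl(n-2\alpha_0(G)\bigr)=n-\alpha_0(G),
$$
because the $\alpha_0(G)$ matching edges cover exactly $2\alpha_0(G)$ vertices and the other $n-2\alpha_0(G)$ vertices give one block each. A subset of $V(G)$ is a vertex cover exactly when its complement is independent. So the complement of a minimum vertex cover is a maximum independent set, and $\beta_0(G)=n-\alpha_0(G)$. Therefore the partition has exactly $\beta_0(G)$ cliques.

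No step here is a real obstacle. The only place the K\"onig hypothesis is used is the count $|M|=\alpha_0(G)$: it ensures that $2\alpha_0(G)\le n$ and that the number of blocks is $\beta_0(G)$ rather than something larger.

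For context, a partition into $\beta_0(G)$ cliques is the input needed for Theorem~\ref{perfect-sop}(a). This lemma therefore yields the 0-1 linear system of parameters for K\"onig graphs.
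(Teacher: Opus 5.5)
Your proposal is correct and follows essentially the same route as the paper: you take a maximum matching of size $\beta_1(G)=\alpha_0(G)$, complete it with the uncovered vertices as singletons, and count $\alpha_0(G)+(n-2\alpha_0(G))=n-\alpha_0(G)=\beta_0(G)$ blocks. You also spell out the justification of $\beta_0(G)=n-\alpha_0(G)$ via complements of vertex covers, which the paper leaves implicit.
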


\begin{proof} Since $\beta_1(G)=\alpha_0(G)$, there are independent
edges $A_1,\ldots,A_{\alpha_0(G)}$ of $G$. Then, one has a partition 
as in Eq.~\eqref{sep13-26}. The number of sets in this partition is 
$$\alpha_0(G)+(n-2\alpha_0(G))=n-\alpha_0(G)=\beta_0(G),$$
and the proof is complete.
\end{proof}

\begin{proposition}\cite{2021-HerzogMoradi}\label{konig-0-1-sop}
If $G$ is a K\"onig graph, then for any maximum matching
$A_1,\ldots,A_{\alpha_0(G)}$ of $G$, the ring $S/I(G)$ has a 0-1 linear system of
parameters $\{f_1,\ldots,f_d\}$, $d=\beta_0(G)$, such that 
$f_i=\sum_{x_j\in A_i}x_j$ for $i=1,\ldots,\alpha_0(G)$ and
$f_i$ is a variable not in $A_1\cup\cdots\cup A_{\alpha_0(G)}$ for $i>\alpha_0(G)$.   
\end{proposition}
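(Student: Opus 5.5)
This is a direct combination of Lemma~\ref{konig-partition} and Theorem~\ref{perfect-sop}(a). Let $d=\beta_0(G)=\dim(S/I(G))$ and fix any maximum matching $A_1,\ldots,A_{\alpha_0(G)}$ of $G$. Since $G$ is K\"onig, $\beta_1(G)=\alpha_0(G)$, so this matching has exactly $\alpha_0(G)$ edges. These edges are pairwise disjoint two-element cliques.

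By Lemma~\ref{konig-partition}, $V(G)$ is partitioned into the $\alpha_0(G)$ edges $A_i$ together with the singletons $\{x_j\}$ for $x_j\notin A_1\cup\cdots\cup A_{\alpha_0(G)}$. The total number of blocks is
$$\alpha_0(G)+(n-2\alpha_0(G))=n-\alpha_0(G)=\beta_0(G)=d.$$
Singletons are trivially cliques. Label the singleton blocks $A_{\alpha_0(G)+1},\ldots,A_d$. Then $A_1,\ldots,A_d$ is a partition of $V(G)$ into exactly $d$ cliques of $G$.

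Apply Theorem~\ref{perfect-sop}(a) to this partition. It gives that $f_i=\sum_{x_j\in A_i}x_j$, $i=1,\ldots,d$, form a 0-1 linear system of parameters of $S/I(G)$. For $i\le\alpha_0(G)$, $f_i$ is the sum of the two endpoints of the $i$-th matching edge. For $i>\alpha_0(G)$, $f_i$ is a single variable not covered by the matching. This is exactly the stated form.

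The only point to check is that the number of cliques equals $d$, since Theorem~\ref{perfect-sop}(a) needs exactly $d$ cliques. This is the count above, and it uses the K\"onig hypothesis $\beta_1(G)=\alpha_0(G)$ together with $\alpha_0(G)+\beta_0(G)=n$. The same argument works for every maximum matching, because any maximum matching has $\beta_1(G)=\alpha_0(G)$ edges.
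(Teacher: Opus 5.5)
Your proposal is correct and is essentially the paper's own proof. The paper gets the partition of $V(G)$ into $\beta_0(G)$ cliques (the matching edges plus the uncovered singletons) from Lemma~\ref{konig-partition}, then applies Theorem~\ref{perfect-sop}(a); your clique count repeats the verification done inside that lemma.
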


\begin{proof} The result follows from Theorem~\ref{perfect-sop}(a) and 
Lemma~\ref{konig-partition}  
\end{proof}

\begin{definition}\label{star-sop}
Let $G$ be a graph and let $\{z_1, z_2, \ldots, z_d\} \subset V(G)$.
Then, a 0-1 linear system of parameters $\{f_1, \ldots, f_d\}$ of $S/I(G)$ 
is a \textit{star-system of parameters} associated to $\{z_1, z_2, \ldots, z_d\}$  if 
for each $i$, we can write $f_i=z_i + \sum_{x_j  \in A_i} x_j$, where
$A_i \subset N_G(z_i)$, 
that is, $\{z_i,x_j\}_{x_j\in A_i}$ are the edges
of a star of $G$ with center $z_i$. 
 \end{definition}

\begin{lemma}\label{star-poly}
Let $G$ be a graph, let $z \in V(G)=\{x_1, \ldots, x_n\}$, and
let $f=z + \sum_{x_j  \in A} x_j$, where $A \subset N_G(z)$. Then, 
$z \in {\rm rad}(I(G), f)$.
\end{lemma}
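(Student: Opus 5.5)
Multiplying $f$ by $z$ gives
$$
zf=z^2+\sum_{x_j\in A}zx_j .
$$
Since $A\subset N_G(z)$, each $\{z,x_j\}$ with $x_j\in A$ is an edge of $G$. Hence every monomial $zx_j$ appearing in the sum lies in $I(G)$.

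It follows that $z^2=zf-\sum_{x_j\in A}zx_j$ belongs to $(I(G),f)$. Therefore $z\in{\rm rad}(I(G),f)$, and in fact $z^2\in(I(G),f)$ already.

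The only point to check is the degenerate case $A=\emptyset$, where $f=z$ and the claim is trivial. This is also covered by the same computation. There is no real obstacle here; the lemma is a direct consequence of the definition of $I(G)$ and of $N_G(z)$.
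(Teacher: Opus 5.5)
Your proof is correct and is the same as the paper's: multiply $f$ by $z$, note that each $zx_j$ with $x_j\in A\subset N_G(z)$ lies in $I(G)$, and conclude that $z^2$, hence $z$, lies in the radical. Your observation that $z^2\in(I(G),f)$ already holds is just a slightly more explicit form of what the paper records.
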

\begin{proof}
We set $J:={\rm rad}(I(G), f)$. Then, $zf=z^2 + \sum_{x_j  \in A} z x_j \in J$.
As $x_j  \in A$, we have that $\{z, x_j \}\in E(G)$
implies  $z x_j \in I(G)\subset J$. 
Thus, $z^2 \in J$ since $zf \in J$. Hence, $z \in J$.
\end{proof}

\begin{proposition}\label{odd-cycle}
If $G$ is an odd cycle, then $S/I(G)$ has a star-system of parameters.
\end{proposition}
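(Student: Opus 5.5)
We construct the star-system explicitly and verify the radical condition directly, using Lemma~\ref{star-poly} to dispose of the star centers. Let $G=C_{2k+1}$ with $k\geq 1$, vertices $x_1,\ldots,x_{2k+1}$ and edges $\{x_j,x_{j+1}\}$ (indices mod $2k+1$). Then $d=\beta_0(G)=\dim(S/I(G))=k$, since a maximum independent set of an odd cycle of length $2k+1$ has $k$ vertices. We take the centers $z_i=x_{2i}$ for $i=1,\ldots,k$; this set is independent. The star at $x_{2i}$ contains both of its neighbours, so we set
$$
f_i:=x_{2i-1}+x_{2i}+x_{2i+1},\qquad i=1,\ldots,k.
$$
Each $f_i$ has the form required in Definition~\ref{star-sop}, with $A_i=\{x_{2i-1},x_{2i+1}\}\subset N_G(x_{2i})$. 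It remains to show ${\rm rad}(I(G),f_1,\ldots,f_k)=\mathfrak{m}$, since the number of forms already equals $\dim(S/I(G))$.

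Let $J={\rm rad}(I(G),f_1,\ldots,f_k)$. First, Lemma~\ref{star-poly} gives $x_{2i}\in J$ for every $i$, so all even-indexed variables lie in $J$. Reducing modulo them, $f_i\equiv x_{2i-1}+x_{2i+1}$, hence $x_{2i+1}\equiv -x_{2i-1}$ modulo $J$. Chaining these congruences for $i=1,\ldots,k$ gives $x_{2i+1}\equiv(-1)^i x_1$. In particular $x_{2k+1}\equiv(-1)^k x_1 \pmod J$.

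The key step, and the only place where oddness of the cycle is used, is the edge $\{x_{2k+1},x_1\}$ joining the two odd-indexed endpoints. It gives $x_1x_{2k+1}\in I(G)\subset J$. Combined with the previous congruence, $\pm x_1^2\in J$, so $x_1\in J$ because $J$ is radical. Then every $x_{2i+1}\in J$ by the congruences, so all variables lie in $J$ and $J=\mathfrak{m}$. This argument involves only signs $\pm1$, so it works over any field, including characteristic $2$.

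I expect no serious obstacle; the points to check are the bookkeeping of indices mod $2k+1$ and the small case $k=1$ (the triangle). For $k=1$ we have $f_1=x_1+x_2+x_3$, and the argument above still applies. As a cross-check, one could instead verify via Corollary~\ref{sop-clutters} that for each maximal independent set $F$ the columns of $A_F$ are independent. The direct radical computation avoids enumerating the maximal independent sets of the cycle, which have varying sizes.
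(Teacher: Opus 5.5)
Your proof is correct and takes essentially the same route as the paper. You use the same forms $f_i=x_{2i-1}+x_{2i}+x_{2i+1}$ centered at the even vertices, apply Lemma~\ref{star-poly} to put the centers in $J$, propagate $x_{2i+1}\equiv -x_{2i-1}$, and use the closing edge $\{x_1,x_{2k+1}\}$ to force $x_1^2\in J$; your chained congruences are just an unrolled form of the paper's alternating sum $\sum_j(-1)^{j+1}(x_{2j-1}+x_{2j+1})=x_1\pm x_n\in J$.
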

\begin{proof}
We set $G=C_n=\{x_1,x_2, \ldots, x_n\}$, where $n=2d+1$,  then
$d=\beta_{0}(G)=d$. Now, we will prove  
that $f_1, \ldots f_d$ is a 0-1 system of parameters, where
$f_j=x_{2j-1}+x_{2j}+x_{2j+1}$ for $j=1, \dots, d$. 
By Lemma~\ref{star-poly}, 
$$
x_{2j} \in {\rm rad}(I(G), f_j) \subset J:= {\rm rad}(I(G), f_1, \ldots, f_d),
$$
since $x_{2j-1}, x_{2j+1} \in N_G(x_{2j})$, for $j=1, \dots, d$. Consequently, 
$g_j:=x_{2j-1}+x_{2j+1} \in J$ for $j=1, \dots, d$. This implies,
$f=\sum_{j=1}^d(-1)^{j+1}{g_j} \in J$. But $f=x_1+x_n$ if $d$ is odd and
$f=x_1-x_n$ if $d$ is even. Thus, $x_1 \in J$, since $x_1f \in J$ and
$x_1x_n \in I(G)\subset J$. Hence inductively, $x_{2j+1} \in J$, for $j=1, \dots, d$, since
$g_j \in J$ and $x_1 \in J$. Therefore, $\{x_1, \ldots, x_n\} \in J$ and
$f_1, \ldots, f_d$ is a star-system of parameters, since 
$\{x_2, \ldots, x_{2d}\}$ is an independent set and $x_{2j-1},
x_{2j+1} \in N_G({x_{2j}})$
for $j=1, \dots, d$.
\end{proof}

\begin{proposition}\label{antihole-star-sop}
Let $G=\overline{C_n}$ be the complement of a cycle $C_n$ of length $n\geq
4$. If $\{z_1, z_2\}$ is an independent set of $G$, then
there is a star-system of parameters associated to $\{z_1, z_2\}$.
\end{proposition}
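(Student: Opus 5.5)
We reduce the statement to a direct construction checked with the rank criterion for clutters (Corollary~\ref{sop-clutters}). First we record the combinatorics of $G=\overline{C_n}$, $n\geq 4$. A set is independent in $G$ exactly when it is a clique of $C_n$. Since $C_n$ is triangle-free for $n\geq 4$, we get $\beta_0(G)=2$. Moreover, every vertex of $C_n$ lies on an edge of $C_n$, so the maximal independent sets of $G$ are precisely the $n$ edges $\{x_i,x_{i+1}\}$ of $C_n$, with indices mod $n$. Since $\{z_1,z_2\}$ is independent in $G$, it is an edge of $C_n$. By relabeling we may assume $C_n$ is $x_1,x_2,\ldots,x_n$ in cyclic order, with $z_1=x_1$ and $z_2=x_2$. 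Then
$$
N_G(x_1)=\{x_3,\ldots,x_{n-1}\},\qquad N_G(x_2)=\{x_4,\ldots,x_n\}.
$$

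We look for $f_1=x_1+\sum_{x_j\in A_1}x_j$ and $f_2=x_2+\sum_{x_j\in A_2}x_j$ with $A_1\subset N_G(x_1)$ and $A_2\subset N_G(x_2)$. We encode $x_j$ by its column $c_j=(a_{1,j},a_{2,j})^\top\in\{0,1\}^2$. By Corollary~\ref{sop-clutters}, $\{f_1,f_2\}$ is a system of parameters if and only if $c_i,c_{i+1}$ are linearly independent for every $i$ (mod $n$). Over any field, two $0$-$1$ columns are independent exactly when both are nonzero and they are distinct. The star constraints force $c_1=e_1$, $c_2=e_2$, $a_{2,3}=0$ and $a_{1,n}=0$.

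The choice we make is the following:
\begin{itemize}
\item $c_j=e_1$ for odd $j$ with $3\leq j\leq n-1$;
\item $c_j=e_2$ for even $j$ with $4\leq j\leq n-2$;
\item $c_n=e_2$;
\item if $n-1$ is even, $c_{n-1}=e_1+e_2$ instead.
\end{itemize}
Put differently, $A_1=\{x_j: j \text{ odd},\ 3\le j\le n-1\}\cup\{x_{n-1}\}$ and $A_2=\{x_j: j \text{ even},\ 4\le j\le n\}\cup\{x_{n-1}\ \text{if } n-1 \text{ even}\}$. These satisfy $A_1\subset\{x_3,\ldots,x_{n-1}\}$ and $A_2\subset\{x_4,\ldots,x_n\}$, so both are stars centered at $z_1$ and $z_2$ respectively.

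Next we check the adjacent pairs. The pairs $(c_1,c_2)=(e_1,e_2)$ and $(c_2,c_3)=(e_2,e_1)$ are independent. Along $3,\ldots,n-2$ the columns alternate between $e_1$ and $e_2$. The pair $(c_{n-1},c_n)$ is $(e_1,e_2)$ or $(e_1+e_2,e_2)$, both independent. When $c_{n-1}=e_1+e_2$, the pair $(c_{n-2},c_{n-1})$ is $(e_1,e_1+e_2)$, which is independent. Finally $(c_n,c_1)=(e_2,e_1)$ is independent.

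The small cases need separate checking, since some of the index ranges become empty or overlap:
\begin{itemize}
\item $n=4$: the columns are $e_1,e_2,e_1,e_2$.
\item $n=5$: here $n-1=4$ is even, so the columns are $e_1,e_2,e_1,e_1+e_2,e_2$.
\end{itemize}
In both cases every cyclic pair is independent.

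Corollary~\ref{sop-clutters} then gives that $\{f_1,f_2\}$ is a $0$-$1$ linear system of parameters, and by construction it is a star-system associated to $\{z_1,z_2\}$. The only delicate point is the parity of $n$ near the end of the cycle. This is what forces the column $e_1+e_2$ at $x_{n-1}$ when $n$ is odd; it is allowed because $x_{n-1}$ is adjacent in $G$ to both $x_1$ and $x_2$.
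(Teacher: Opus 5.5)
Your construction is the paper's. For even $n$ it is the split into odd- and even-indexed vertices. For odd $n$ it is the paper's partition into the cliques $B'_1$ (odd indices except $n$), $B'_2$ (even indices except $n-1$, together with $x_n$) and $B'_3=\{x_{n-1}\}$, with $x_{n-1}$ placed in both forms. Your direct check that cyclically consecutive columns are distinct nonzero $0$-$1$ vectors is the same rank verification the paper obtains by invoking the proof of Proposition~\ref{beta_0=2}.

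One slip to fix: your ``put differently'' formula for $A_2$ drops $x_n$ when $n$ is odd. It should read $\{x_j : j \text{ even},\ 4\le j\le n-2\}\cup\{x_n\}\cup\{x_{n-1} \text{ if } n \text{ is odd}\}$. Your bullet list (with $c_n=e_2$) and your verification are correct, so this affects only the restatement.
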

\begin{proof}
Let $H=C_n$ be a cycle $H=\{x_1, \ldots, x_n\}$ with $n \geq 4$.
Thus, the independent sets of $G$ are the edges of $H$. So, we can
assume $\{z_1, z_2\}=\{x_1, x_2\}$. 
First suppose $n$ is even, then $V(G)=B_1 \cup B_2$ where  $B_1=\{x_i
\mbox{ } \vert \mbox{ } i \mbox{ is odd} \}$ 
and $B_2=\{x_i \mbox{ } \vert \mbox{ } i \mbox{ is even} \}$. Also
$B_1$ and $B_2$ are cliques of $G$. 
Hence, by the proof of a  Proposition~\ref{beta_0=2},  $\{f_1, f_2\}$
is a system of parameters 
where  $f_i:=\sum_{x_j \in B_i} x_j$. Furthermore, $x_i \in B_i
\subset N_G[x_i]$ for $i=1, 2$. 
Now if $n$ is odd, then $V(G)=B'_1 \cup B'_2 \cup B'_3$ where
$B'_1= \{x_i \mbox{ } \vert \mbox{ } i \mbox{ is odd} \}  \setminus \{x_n\}$,
$B'_2= (\{x_i \mbox{ } \vert \mbox{ } i \mbox{ is even} \} \setminus
\{x_{n-1}\}) \cup \{x_n\}$ and $B'_3=\{x_{n-1}\}$.  
Also, $B'_1$, $B'_2$ and $B'_3$ are cliques of $G$.
Hence, by the proof of  Proposition~\ref{beta_0=2},  $\{f_1, f_2\}$ is a system of parameters
where  $f_i:=\sum_{x_j \in A_i} x_j$, $A_1=B'_1 \cup B'_3$, 
$A_2=B'_2 \cup B'_3$, and we have that $x_i \in A_i \subset N_G[x_i]$
for $i=1, 2$.  Thus, in both cases $\{f_1,f_2\}$ is an star-system of
parameters associated to $\{z_1,z_2\}$.
\end{proof}

In Example~\ref{no-start-sop}, we give a graph $G$ with
$\beta_0(G)=2$ and an independent set $\{z_1, z_2\}$  
such that $S/I(G)$ has a 0-1 system of parameters, but does not
have a star system of parameter
associated to $\{z_1, z_2\}$.
Thus, Proposition~\ref{antihole-star-sop} is not true  if $G$ is
not 
the complement of a cycle. 

\begin{definition}\label{reducible-def}
A collection of subgraphs $G_1, \ldots, G_s$ of a graph $G$ such that
$\{V(G_i)\}_{i=1}^s$ is 
a partition of $V(G)$ with $s \geq 2$ and $\alpha_0(G)=\sum_{i=1}^s
\alpha_0(G_i)$, 
is called an $\alpha_0$-\textit{reduction} of $G$. Furthermore, a graph is called 
$\alpha_0$-$\textit{reducible}$ if $G$ has an $\alpha_0$-reduction
and $\alpha_0$-\textit{irreducible} if it does not have an $\alpha_0$-reduction.  
\end{definition}

\begin{remark}\label{b_0-reduction}
Let $G_1, \ldots, G_s$ be an $\alpha_0$-reduction of a graph $G$ and
let $n_i=\vert V(G_i)\vert$. 
Then,  $\alpha_0(G_i) = n_i\ - \beta_0(G_i)$
and since $\alpha_0(G)=\sum_{i=1}^s\alpha_0(G_i)$, we have
$$
\vert V(G) \vert -\beta_0(G)=\alpha_0(G)=\sum_{i=1}^s\alpha_0(G_i)=\sum_{i=1}^s(n_i-\beta_0(G_i))=\sum_{i=1}^sn_i - \sum_{i=1}^s\beta_0(G_i).
$$
Hence, $\beta_0(G)=\sum_{i=1}^s\beta_0(G_i)$ since
$\{V(G_i)\}_{i=1}^s$ is a partition of $V(G)$ and $|V(G)|=\sum_{i=1}^sn_i$.
\end{remark}

\begin{proposition}\label{reduction-star-0-1}
If $G$ has a $\alpha_0$-reduction $G_1, \ldots, G_s$ such that the
edge ring of each $G_i$  has a 0-1 system (resp. star-system) 
of parameters, then the edge ring of $G$ has a 0-1 system (resp. star-system) of parameters.
\end{proposition}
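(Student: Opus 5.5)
The plan is to concatenate the systems of parameters of the pieces. For each $i=1,\ldots,s$, let $d_i=\beta_0(G_i)$ and let $\{f^{(i)}_1,\ldots,f^{(i)}_{d_i}\}$ be a 0-1 (resp. star) linear system of parameters of the edge ring of $G_i$. Each $f^{(i)}_k$ is a linear form in the variables of $V(G_i)$ only. We regard all of them as elements of $S$ and take
$$
\underline{f}=\bigcup_{i=1}^s\{f^{(i)}_1,\ldots,f^{(i)}_{d_i}\}.
$$
By Remark~\ref{b_0-reduction}, $\sum_i d_i=\beta_0(G)=\dim(S/I(G))$, so $\underline{f}$ has the correct cardinality. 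Its coefficient matrix $A$ is block diagonal, with the blocks $A^{(i)}$ of the $G_i$ placed in the rows of $G_i$ and the columns indexed by $V(G_i)$, since the $V(G_i)$ partition $V(G)$. In particular $A$ is a 0-1 matrix.

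To check that $\underline{f}$ is a system of parameters, we use Corollary~\ref{sop-clutters}. Let $F$ be a maximal independent set of $G$ and put $F_i=F\cap V(G_i)$; each $F_i$ is independent in $G_i$. The key step is to show that each $F_i$ is a \emph{maximal} independent set of $G_i$. We have $|F_i|\le\beta_0(G_i)$, but maximality in $G_i$ does not follow from maximality of $F$ in $G$ by cardinality alone, since $F$ need not be maximum. This is the main obstacle.

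The way around it is to use a weaker requirement. By block diagonality, the submatrix $A_F$ is block diagonal with blocks $A^{(i)}_{F_i}$, so it suffices that each $A^{(i)}_{F_i}$ has full column rank. The set $F_i$ is contained in some maximal independent set $F_i'$ of $G_i$. By Corollary~\ref{sop-clutters} applied to $G_i$, the columns of $A^{(i)}_{F_i'}$ are linearly independent, hence so are those of the subset of columns $A^{(i)}_{F_i}$. Therefore $A_F$ has full column rank $|F|$, and Corollary~\ref{sop-clutters} shows that $\underline{f}$ is a 0-1 system of parameters of $S/I(G)$. So maximality of $F_i$ is never actually needed.

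For the star case, suppose each system for $G_i$ is a star-system associated to $\{z^{(i)}_1,\ldots,z^{(i)}_{d_i}\}$, with $f^{(i)}_k=z^{(i)}_k+\sum_{x_j\in A^{(i)}_k}x_j$ and $A^{(i)}_k\subset N_{G_i}(z^{(i)}_k)$. Since $G_i$ is a subgraph of $G$, we have $N_{G_i}(z)\subset N_G(z)$. Hence $\underline{f}$ is a star-system of parameters of $S/I(G)$ associated to $\bigcup_i\{z^{(i)}_1,\ldots,z^{(i)}_{d_i}\}$, which by Definition~\ref{star-sop} only requires this form together with the system-of-parameters property already proved.
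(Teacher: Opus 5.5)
Your proposal is correct and follows the paper's proof. It concatenates the systems of the $G_i$, uses Remark~\ref{b_0-reduction} to get $\beta_0(G)$ forms, and applies Corollary~\ref{sop-clutters} to the block-diagonal $A_F$ with blocks $A_{F_i}$, $F_i=F\cap V(G_i)$; you also make explicit two points the paper leaves implicit, namely that $F_i$ need only lie inside some maximal independent set of $G_i$ for its columns to be independent, and that the star property transfers to $G$ because $N_{G_i}(z)\subset N_G(z)$.
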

\begin{proof}
Let $\{f_1^i, \ldots, f_{d_i}^i \}$ be a 0-1 system of parameters of $G_i$,
where $d_i=\beta_0(G_i)$. We let $A_i$ be the $d_i \times n_i$ matrix whose entries 
are the coefficients of $f_1^i, \ldots, f_{d_i}^i $. Now, we will prove that 
$\underline{f}=\cup_{i=1}^s \{ f_1^i, \ldots, f_{d_i}^i  \}$ is a system of parameters of $G$. 
First, by Remark~\ref{b_0-reduction},  
$$
\vert \underline{f} \vert= \sum_{i=1}^s d_i= \sum_{i=1}^s \beta_0(G_i)=\beta_0(G),
$$
since $\{V(G_1), \ldots, V(G_s)\}$ is a partition of $V(G)$.
Consequently, the $\beta_0(G) \times n$ matrix 
whose entries are the coefficients of the elements of $\underline{f}$ has the
form
\[
\begin{pmatrix}
A_1&0&0&\cdots&0\\
0&A_2&0&\cdots&0\\
0&0&A_3&\cdots&0\\
\vdots&\vdots&\vdots&\ddots&\vdots\\
0&0&0&\cdots&A_s\\
\end{pmatrix}.
\]
If $F$ is an independent set of $G$, then $\{F_1, \ldots, F_s\}$ is a partition of $F$,
where $F_i=F \cap V(G_i)$. This implies,
\[
A_F=
\begin{pmatrix}
A_{F_1}&0&0&\cdots&0\\
0&A_{F_2}&0&\cdots&0\\
0&0&A_{F_3}&\cdots&0\\
\vdots&\vdots&\vdots&\ddots&\vdots\\
0&0&0&\cdots&A_{F_s}\\
\end{pmatrix}.
\]
Hence, rank$(A_F)=\sum_{i=1}^s$rank$(A_{F_i})$ and $A_F$ has full column rank,
since $A_{F_i}$ has full column rank. Therefore, by
Corollary~\ref{sop-clutters},
$\underline{f}$ is a system of parameters of $S/I(G)$.
\end{proof}

\begin{conjecture}\label{conjecture-star}
If $S/I(G)$ has a 0-1 system of parameters, then $S/I(G)$ has a star-system of parameters
associated to some maximal independent set $\{z_1, \ldots, z_d\}$ of $G$. 
\end{conjecture}
\section{Examples}\label{examples-section}

\begin{example}
Let $B$ be a matrix of full column rank $r$ with entries in
$\mathbb{R}$. Then, by Lemma~\ref{pseudo-inverse}, the matrix 
given by $B^\dag:=(B^\top B)^{-1}B^\top$ is well defined and
$B^{\dag}B=I_r$, that is, $B$ is left-invertible. The matrix
$B^{\dag}$  
called a \text{\it pseudo inverse} of $B$. If $c,b$ are vectors
and $Bc=b$, then $c=B^{\dag}b$. 
\end{example}

\begin{example}
Let $S=K[x_1,x_2,x_3,x_4,x_5]$ and let $I=(x_1x_2,x_2x_3,x_3x_4,x_4x_5,x_5x_1).$
This is the edge ideal of the cycle graph $C_5$. Its minimal primes are
\[
\begin{aligned}
\mathfrak p_1&=(x_1,x_3,x_5),&
\mathfrak p_2&=(x_1,x_3,x_4),&
\mathfrak p_3&=(x_1,x_2,x_4),&
\mathfrak p_4&=(x_2,x_4,x_5),&
\mathfrak p_5&=(x_2,x_3,x_5).&
\end{aligned}
\]
Since all these primes have height $3$, we have
\[
\dim(S/I)=5-3=2.
\]	
Consider the linear forms
\[
f_1=x_1+x_2+x_3+x_4+x_5\ \mbox{ and }\ f_2=x_1+2x_2+3x_3+4x_4+5x_5.
\]
Their coefficient matrix is 
\[
A=
\begin{pmatrix}
1&1&1&1&1\\
1&2&3&4&5
\end{pmatrix}.
\]
For the minimal primes listed above, the corresponding submatrices 
in Theorem~\ref{rank-criterion} are
\[
A_{\mathfrak p_1}=
\begin{pmatrix}
1&1\\
2&4
\end{pmatrix},
\quad
A_{\mathfrak p_2}=
\begin{pmatrix}
1&1\\
2&5
\end{pmatrix},
\quad
A_{\mathfrak p_3}=
\begin{pmatrix}
1&1\\
3&5
\end{pmatrix},
\quad
A_{\mathfrak p_4}=
\begin{pmatrix}
1&1\\
1&3
\end{pmatrix},
\quad
A_{\mathfrak p_5}=
\begin{pmatrix}
1&1\\
1&4
\end{pmatrix}.
\]
Their determinants are $2, 3, 2, 2, 3$, respectively.
Therefore, if $\operatorname{char}(K)\notin\{2,3\}$, all the matrices
$A_{\mathfrak p_i}$ have full column rank. Hence, by Theorem~\ref{rank-criterion},
\[
\{f_1,f_2\}
\]
is a system of parameters for $S/I$.
On the other hand, this particular pair is not a system of parameters
over fields of characteristic $2$ or $3$. Thus, the example illustrates
that the existence of a given linear system of parameters may depend on
the characteristic of the field.
\end{example}

\begin{example}\label{mono-dim=2} Let $S=K[x_1,x_2,x_3,x_4]$ be a polynomial ring over a
field $K$. Consider the monomial ideal  
\begin{align*}
I=&(x_2^5,x_2^3x_3^2,x_1^4x_3^3,x_1^3x_2x_3^3,x_2^2x_3^3,x_2x_3^4,x_2^3x_4,x_2x_3^3x_4^2)\\
=&(x_1^4,x_2)\cap(x_2^3,x_3^3)\cap(x_1^3,x_2^2,x_3^4,x_4^2)\cap
(x_2^5,x_3^2,x_4).
\end{align*}
\quad The associated primes of $I$ are $\{x_1,x_2\}$, $\{x_2,x_3\}$,
$\{x_2,x_3,x_4\}$, and $\mathfrak{m}=(x_1,x_2,x_3,x_4)$, and the
minimal primes of $I$ are $\mathfrak{p}_1=\{x_1,x_2\}$ and
$\mathfrak{p}_2=\{x_2,x_3\}$. If $K$ is a field of ${\rm char}(K)=0$,
by Corollary~\ref{coro-dim=2}, the polynomials 
$$
f_1=x_1+2x_2+3x_3+4x_4\ \mbox{ and }\ f_2= x_2+2x_3+3x_4
$$
form a system of parameters for $S/I$, and by Theorem~\ref{rank-criterion}, 
it follows that $\{x_4,x_1+x_3\}$ is a 0-1 system of parameters 
over any field $K$. 
\end{example}

\begin{example}\label{sop-from-ff}
Let $K$ be the finite field $\mathbb{F}_5$, let $S=K[x_1,x_2,x_3,x_4,x_5]$,
let  
$$
A=\begin{pmatrix}1& 0& 0& 1& 1\\
 0& 1& 0& 1& 2\\
0& 0& 1& 1& 3
\end{pmatrix},
$$
and let $C$ be the linear code generated by the rows of $A$. Using 
the rank criterion (Theorem~\ref{rank-criterion}) and the MDS
criterion (Theorem~\ref{mds-minors}), one has that $C$ is an
$[5,3,3]_5$ linear code, and 
$$f_1=x_1+x_4+x_5,\ f_2=x_2+x_4+2x_5,\  
f_3=x_3+x_4+3x_5, 
$$
are a linear system of parameters for the quotients rings of any of the following monomial
ideals of $S$ of height $2$:
\begin{align*}
&I=(x_1^2x_2,x_2x_3,x_1x_3^2,x_1x_2x_4,x_1x_4^2,x_2x_4x_5),\\
&{\rm rad}(I)=(x_1x_2,x_1x_3,x_2x_3,x_1x_4,x_2x_4x_5),\quad
J=(x_1,x_2),\\
&L=(x_1x_2x_3x_4,x_1x_2x_3x_5,x_1x_2x_4x_5,x_1x_3x_4x_5,x_2x_3x_4x_5).
\end{align*}
\quad If $K=\mathbb{F}_2$, then $C$ is a $[5,3,2]_2$ linear code and
$\{x_3,x_2+x_5,x_1+x_2+x_4\}$ is a system of parameters for $S/I$.
This example corresponds to Procedure~\ref{procedure-MAGMA}.
\end{example}

\begin{example}
Let $S=K[x_1,x_2,x_3,x_4]$ be a polynomial ring over a 
field $K$ and let 
\begin{align*}
I=(x_1x_2x_3,x_1x_2x_4,x_1x_3x_4,x_2x_3x_4)=\bigcap_{1\leq i<j\leq
4}(x_i,x_j).
\end{align*}
\quad Then, $\dim(S/I)={\rm ht}(I)=2$. If ${\rm char}(K)=0$, by
Corollary~\ref{coro-dim=2},  
$$f_1=x_1+2x_2+3x_3+4x_4,\ \ f_2=x_2+2x_3+3x_4
$$
is a system of parameters for $S/I$. If $K=\mathbb{Q}$, there is no
0-1 system of parameters for $S/I$, this follows using the zeroOneSOP
function of Procedure~\ref{0-1-sop}. If $K=\mathbb{Z}_2$, then there
are no linear systems of parameters for $S/I$, this follows applying
the rank criterion (Theorem~\ref{rank-criterion}) to all
possible systems of parameters of $S/I$ or equivalently finding the
$2\times 2$ minors of all $2\times 4$ matrices of rank $2$ over $\mathbb{F}_2$. 
\end{example}

\begin{example}
Let $G$ be the graph with vertex set $V(G)=\{x_1, x_2, x_3, x_4, x_5, z_1, z_2\}$ which
consists of  a $5$-cycle $C_5=\{x_1,x_2,x_3, x_4,x_5\}$ 
and $N_G(z_1)=N_G(z_2)=\{x_1,x_2,x_3, x_4,x_5\}$, and let $S=K[V(G)]$.
We take a maximal independent set $F$. If $z_1 \in F$ or $z_2 \in
F$, then $F=\{z_1, z_2\}$. Now, if
$z_1, z_2 \notin F$, then $F \subset \{x_1,x_2,x_3, x_4,x_5\}=V(C_5)$, implies
$\vert F \vert=2$, since $F$ is maximal. Hence, $\beta_0(G)=2$ and $G$ is unmixed.
Furthermore, $V(G)$ has a partition in cliques 
\begin{align*}
&B_1 \cup B_2 \cup B_3,\ \mbox{ where }\ 
B_1=\{z_1, x_1, x_2\},\  B_2=\{z_2, x_4, x_5\},\ B_3=\{x_3\},
\end{align*}
then $S/I(G)$ has a  
star-system of parameters by Proposition~\ref{beta_0=2}. 
On the other hand $\{z_1, z_2\}$ is a maximal independent set of $G$. We take
$f_1, f_2 \in S$ such that
$f_1=z_1+\sum_{x_j  \in A_1} x_j$ and $f_2=z_2+\sum_{x_j  \in A_2} x_j$,
where $A_1=\{x_1, x_2, x_3\} \subset N_G(z_1)$ and $A_2=\{x_1, x_4,
x_5\} \subset N_G(z_2)$.
By the last argument if $F$ is a maximal independent set, then 
$$F=\{a,b\} \in \{ \{z_1,z_2\},  \{x_1,x_3\},  \{x_1,x_4\},
\{x_2,x_4\},  \{x_2,x_5\},  \{x_3,x_5\}  \}.$$
In each case ${\rm rank}(A_{F})=2$. Hence, by
Corollary~\ref{sop-clutters}, 
$\{f_1,f_2\}$ is a star system of parameters of $S/I(G)$. Note that
$G$ is $\alpha_0$-irreducible since $V(G)$ has no partition in two
cliques and $\beta_0(G)=2$. 
\end{example}

\begin{example}\label{explicit-lsop-ex}
Let $S=K[x_1,\ldots,x_8]$ be a polynomial ring over a field $K$ and
let $I$ be the ideal 
$$I=(x_1x_2,x_2x_3,x_3x_4,x_4x_5,x_5x_6,x_6x_7,x_7x_8,x_1x_8,x_2x_6,x_3x_7).$$
\quad One has $\dim(S/I^k)=3$ and ${\rm ht}(I^k)=5$ for all $k\geq 1$. 
Let $K=\mathbb{Q}$ be the field of rational numbers, letting 
\begin{align*}
&\lambda_1=0,\lambda_2=1,\lambda_3=2,\lambda_4=3,\lambda_5=4,\lambda_6=5,\lambda_7=6,\lambda_8=7,\\
&f_1=x_1+x_2+x_3+x_4+x_5+x_6+x_7+x_8,\\
&f_2=
\lambda_1x_1+\lambda_2x_2+\lambda_3x_3+\lambda_4x_4+\lambda_5x_5+\lambda_6x_6+\lambda_7x_7+\lambda_8x_8,\\
&f_3=
\lambda_1^2x_1+\lambda_2^2x_2+\lambda_3^2x_3+\lambda_4^2x_4+\lambda_5^2x_5+\lambda_6^2x_6+\lambda_7^2x_7+\lambda_8^2x_8.
\end{align*}
\quad By Theorem~\ref{explicit-lsop}, $\underline{f}=\{f_1,f_2,f_3\}$ is a system of
parameters for $S/I^k$ for all $k\geq 1$ because the minimal primes of $I$ and $I^k$ are the same
for all $k\geq 1$. Using Procedure~\ref{procedure-NN} for
\textit{Macaulay}$2$ it follows that for 
$k=1$ and $k=2$, one has 
$$
(I^k:f_1)=I^k,\ \ ((I^k,f_1):f_2)=(I^k,f_1),\ \
((I^k,f_1,f_2):f_3)=(I^k,f_1,f_2),
$$
that is, $\underline{f}$ is a regular sequence of $S/I^k$, and
consequently $S/I^k$ is
Cohen--Macaulay for $k=1$ and $k=2$. If $k=3$, $\underline{f}$ is not
a regular sequence and $S/I^k$ is not Cohen--Macaulay. If $k=4$, the
maximal ideal $\mathfrak{m}$ of $S$ is an associated prime of $S/I^4$,
and by persistence property of edge ideals of graphs \cite{ass-powers}, $\mathfrak{m}$
is an associated prime of 
$S/I^k$ for all $k\geq 4$. Hence, $S/I^k$ has depth equal to $0$ for
all $k\geq 4$ and $S/I^k$ is not Cohen--Macaulay for all $k\geq 4$.
Similar statements hold if we let $K=\mathbb{F}_8$ be a finite field, 
with multiplicative group $K^*=(a)$ generated by $a$, and 
\begin{align*}
&\lambda_1=0_S,\lambda_2=a,\lambda_3=a^2,\lambda_4=
a^3,\lambda_5=a^4,\lambda_6=a^5,\lambda_7=a^6,\lambda_8=a^7,\\
&f_1=x_1+x_2+x_3+x_4+x_5+x_6+x_7+x_8,\\
&f_2=
\lambda_1x_1+\lambda_2x_2+\lambda_3x_3+\lambda_4x_4+\lambda_5x_5+\lambda_6x_6+\lambda_7x_7+\lambda_8x_8,\\
&f_3=
\lambda_1^2x_1+\lambda_2^2x_2+\lambda_3^2x_3+\lambda_4^2x_4+\lambda_5^2x_5+\lambda_6^2x_6+\lambda_7^2x_7+\lambda_8^2x_8.
\end{align*}
\end{example}

\begin{example}\label{no-0-1-sop} Let $S=\mathbb{Q}[x_1,x_2,x_3,x_4,x_5]$
and let $I$ be the monomial ideal 
$$I=(x_1x_2x_3x_4,x_2x_3x_4x_5,x_1x_3x_4x_5,x_1x_2x_4x_5,x_1x_2x_3x_5).$$
\quad Applying the function zeroOneSOP of 
Procedure~\ref{0-1-sop} to $I$, we obtain 
that there is no 0-1 linear system of parameters for $S/I$.
\end{example}
\begin{example}\label{Grotzsch-example} Let $H$ be the
Mycielski--Gr\"otzsch graph given in 
Figure~\ref{Grotzsch-graph}, 
that arises in coloring problems \cite[Chapter~12, Fig.
12.8]{soifer}. The graph $H$ has $11$ vertices, $20$ edges,
independence number $\beta_0(H)=5$, covering number $\alpha_0(H)=6$,
chromatic number equal to $4$, and is not a perfect graph.
The
edge ideal $I(H)$ of $H$ is given by  
\begin{align*}
I(H)=&(x_1x_2,x_2x_3,x_3x_4,x_1x_5,x_4x_5,x_2x_6,x_5x_6,x_1x_7,x_3x_7,x_2x_8,x_4x_8,x_3x_9,\\
&x_5x_9,x_1x_{10},x_4x_{10},x_6x_{11},x_7x_{11},x_8x_{11},x_9x_{11},x_{10}x_{11}).
\end{align*}
Using  the zeroOneSOP function of Procedure~\ref{0-1-sop}, we obtain the following 0-1 linear
system of parameters for $S/I(H)$:
\[
f_1=x_6, f_2=x_3+x_7, f_3=x_1+x_2+x_8, f_4=x_4+x_5+x_9,
f_5=x_1+x_4+x_{10}+x_{11}.
\]
\begin{figure}[H]
\centering
\begin{tikzpicture}[scale=0.8,
	vertex/.style={
		circle, 
		draw, 
	fill=white, 
	minimum size=6mm, 
	inner sep=0pt}, 
	edge/.style={thick}
	]
	\node[vertex] (x1) at (90:3cm)  {$x_1$};
	\node[vertex] (x2) at (18:3cm)  {$x_2$};
	\node[vertex] (x3) at (-54:3cm) {$x_3$};
	\node[vertex] (x4) at (-126:3cm){$x_4$};
	\node[vertex] (x5) at (162:3cm) {$x_5$};
	
	\node[vertex] (x6) at (90:1.35cm)   {$x_6$};
	\node[vertex] (x7) at (18:1.35cm)   {$x_7$};
	\node[vertex] (x8) at (-54:1.35cm)  {$x_8$};
	\node[vertex] (x9) at (-126:1.35cm) {$x_9$};
	\node[vertex] (x10) at (162:1.35cm)  {$x_{10}$};
	
	\node[vertex] (x11) at (0,0) {$x_{11}$};
	
	\draw[edge] (x1) -- (x2);
	\draw[edge] (x2) -- (x3);
	\draw[edge] (x3) -- (x4);
	\draw[edge] (x4) -- (x5);
	\draw[edge] (x5) -- (x1);
	\draw[edge] (x1) -- (x7);
	\draw[edge] (x1) -- (x10);
	
	\draw[edge] (x2) -- (x6);
	\draw[edge] (x2) -- (x8);
	
	\draw[edge] (x3) -- (x7);
	\draw[edge] (x3) -- (x9);
	
	\draw[edge] (x4) -- (x8);
	\draw[edge] (x4) -- (x10);
	
	\draw[edge] (x5) -- (x9);
	\draw[edge] (x5) -- (x6);
	
	\draw[edge] (x11) -- (x6);
	\draw[edge] (x11) -- (x7);
	\draw[edge] (x11) -- (x8);
	\draw[edge] (x11) -- (x9);
	\draw[edge] (x11) -- (x10);
\end{tikzpicture}
\vspace{2mm}
\caption{Mycielski--Gr\"otzsch graph $H$.}\label{Grotzsch-graph} 
\end{figure}
\vspace{-4mm}
\end{example}

\begin{example}\label{example-comp-Grotzsch-graph} Let $G$ be the graph of
Figure~\ref{comp-Grotzsch-graph} whose complement
$\overline{G}$ is the 
Mycielski--Gr\"otzsch graph $H$ given in Figure~\ref{Grotzsch-graph}.
The graph $G$ has $11$ vertices, $35$ edges,
independence number $\beta_0(G)=2$, covering number $\alpha_0(H)=9$,
chromatic number equal to $6$, and is not a perfect graph.
The
edge ideal $I=I(G)$ of $G$ is given by  
\begin{align*}
I(G) = &(x_1x_3, x_1x_4, x_2x_4, x_2x_5, x_3x_5,
    x_1x_6, x_1x_8, x_1x_9,
    x_2x_7, x_2x_9, x_2x_{10},
    x_3x_6, x_3x_8,\\
& x_3x_{10},
    x_4x_6, x_4x_7, x_4x_9,
    x_5x_7, x_5x_8, x_5x_{10},
    x_1x_{11}, x_2x_{11}, x_3x_{11}, x_4x_{11}, x_5x_{11},\\
&    x_6x_7, x_6x_8, x_6x_9, x_6x_{10},
    x_7x_8, x_7x_9, x_7x_{10},
    x_8x_9, x_8x_{10},
    x_9x_{10}).
\end{align*}
Using Procedure~\ref{procedure-NN} and \textit{Macaulay}$2$, we obtain that over the field
$K=\mathbb{Q}$, the following is a linear system of parameters for
$S/I(G)$: 
\begin{align*}
f_1=&\textstyle\frac{5}{3}x_1+\frac{8}{3}x_2+\frac{2}{9}x_3+\frac{10}{3}x_4+\frac{7}{10}x_5+\frac{1}{3}x_6+\frac{10}{9}x_7+\frac{6}{5}x_8+\frac{9}{8}x_9+x_{11},\\
f_2=&\textstyle\frac{4}{5}x_1+\frac{1}{8}x_2+\frac{4}{7}x_3+x_4+\frac{4}{7}x_5+\frac{6}{5}x_6+\frac{3}{10}x_7+\frac{4}{3}x_8+\frac{3}{5}x_9+x_{10},
\end{align*}
$S/I(G)$ is Cohen--Macaulay and $\{f_1,f_2\}$ is a regular system of
parameters for $S/I(G)$
\begin{figure}[H]
\centering
		\begin{tikzpicture}[scale=1.5,thick]
		\tikzstyle{every node}=[minimum width=0pt, inner sep=2pt, circle]
			\draw (-0.7,0.18) node[draw]      (0) {\tiny$x_7$};
			\draw (-0.67,-0.62) node[draw] (1) { \tiny $x_{10}$};
			\draw (0.93,-0.58) node[draw]   (2) { \tiny $x_8$};
			\draw (0.89,0.2) node[draw]        (3) { \tiny $x_6$};
			\draw (0.06,0.93) node[draw]     (4) { \tiny $x_9$};
			\draw (0.05,2.22) node[draw]      (5) { \tiny $x_{11}$};
			\draw (0.04,1.46) node[draw]      (6) { \tiny $x_4$};
			\draw (2,0.91) node[draw]            (7) { \tiny $x_1$};
			\draw (2.05,-1.17) node[draw]     (8) { \tiny $x_3$};
			\draw (-1.79,-1.26) node[draw]   (9) { \tiny $x_5$};
			\draw (-1.87,0.9) node[draw]      (10) { \tiny $x_2$};
			
			\draw[blue]  (0) edge (1);
			\draw[blue]  (0) edge (2);
			\draw[blue]  (0) edge (3);
			\draw[blue]  (0) edge (4);
			\draw[blue]  (1) edge (2);
			\draw[blue]  (1) edge (3);
			\draw[blue]  (1) edge (4);
			\draw[blue]  (2) edge (3);
			\draw[blue]  (2) edge (4);
			\draw[blue]  (3) edge (4);
			\draw[red]  (6) edge (7);
			\draw[green] (4) edge (6);
			\draw[green]  (3) edge (6);
			\draw[green]  (0) edge (6);
			\draw[red]  (6) edge (10);
			\draw[red]  (5) edge (6);
			\draw[red]  (7) edge (8);
			\draw[green]  (3) edge (8);
			\draw[green]  (1) edge (8);
			\draw[red]  (8) edge (9);
			\draw[green]  (2) edge (8);
			\draw[red]  (5) edge (7);
			\draw[red]  (5) edge (10);
			\draw[red]  (5) edge (9);
			\draw[red]  (5) edge (8);
			\draw[green]  (0) edge (10);
			\draw[green]  (4) edge (10);
			\draw[green]  (1) edge (10);
			\draw[green] (4) edge (7);
			\draw[green]  (3) edge (7);
			\draw[green]  (2) edge (7);
			\draw[green]  (2) edge (9);
			\draw[green]  (1) edge (9);
			\draw[red]  (9) edge (10);
			\draw[green]  (0) edge (9);
		\end{tikzpicture}

\vspace{2mm}
\caption{Graph $G$ is the complement of the Mycielski--Gr\"otzsch
graph. 
}\label{comp-Grotzsch-graph}  
\end{figure}

\end{example}

\begin{example} Let $C_7=\{x_1,\ldots,x_7\}$ be a $7$-cycle and let $\overline{C_7}$ be
its complement. A star system of parameters of $S/I(C_7)$ associated
to the independent set 
$\{x_2,
x_4, x_6\}$ of $C_7$ 
 is 
$$f_1=x_1+x_2+x_3,\ f_2=x_3+x_4+x_5,\ f_3=x_5+x_6+x_7,$$
where $x_2,x_4,x_6$ are the centers of the stars corresponding
to $f_1,f_2,f_3$, respectively, and a star system of parameters associated to the
independent set $\{x_1,x_2\}$ of $\overline{C_7}$ is 
$$f_1=x_1+x_3+x_5+x_6,\ f_2=x_2+x_4+x_6+x_7,$$
where $x_1,x_2$ are the centers of the stars corresponding
to $f_1,f_2$, respectively
\end{example}

\begin{example}\label{no-start-sop}
Let $G$ be the graph whose vertex set is $V(G)=\{x_1, x_2, x_3, x_4, x_5,x_6,  z_1, z_2\}$ which
consist of: a $6$-cycle $C_6=\{x_1, x_2, x_3, x_4, x_5, x_6\}$ with $3$ chords $\{x_2, x_6\}$, 
$\{x_3, x_5\}$, $\{x_3, x_6\}$ furthermore $N_G(z_1)=\{x_1, x_2, x_3, x_4\}$ and 
$N_G(z_2)=\{x_1, x_4, x_5, x_6\}$. Then, $A_1=\{z_1, x_2, x_3 \}$, $A_2=\{z_2, x_1, x_6\}$ and 
$A_3=\{x_4, x_5\}$ are cliques  and $A_1 \cup A_2 \cup A_3 = V(G)$.
We prove now that
$\beta_0(G)=2$. We take an independent set $F$. If $z_1 \in F$ or $z_2 \in F$, then 
$$
F \setminus\{z_1\} \subset V(G)\setminus N_G[z_1]= \{z_2, x_5, x_6 \} \mbox{ or } 
F \setminus \{z_2\} \subset V(G) \setminus N_G[z_2]= \{z_1, x_2, x_3 \},
$$
respectively. Thus, $\vert F \vert \leq 2$ since $\{z_2, x_5, x_6 \}$ and $\{z_1, x_2, x_3 \}$ 
are cliques. Now, assume $z_1, z_2 \notin S$, then
$F \in \{x_1, x_2, x_6\} \cup \{x_3, x_4, x_5\}$. But $\{x_1, x_2, x_6\}$ and $ \{x_3, x_4, x_5\}$  
are cliques, then $\vert F \vert \leq 2$.  Hence, $\beta_0(G)=2$ since $\{z_1, z_2\}$ is an 
independent set. Consequently, by Proposition~\ref{beta_0=2},
$S/I(G)$ has a 0-1-system of 
parameters, since $V(G)$ has a partition in three cliques. Finally, we will prove that $S/I(G)$ 
does not have a star-system of parameters associated to $\{z_1, z_2\}$.
By contradiction, suppose there is a 0-1-system of parameters $\{f_1, f_2\}$  of $S/I(G)$ 
such that $f_1=z_1 + \sum_{x_i \in A'_1}x_i$ and $f_2=z_2 + \sum_{x_i \in A'_2}x_i$ where $
A'_j \subset N_G(z_j)$ for $j=1,2$. By Lemma~\ref{sop-covers-vg}, 
$A'_1 \cup A'_2=\{x_1, \ldots, x_6\}$. Furthermore, $x_2, x_3 \in N_G(z_1) \setminus N_G(z_2)$ and 
$x_5, x_6 \in N_G(z_2) \setminus N_G(z_1)$, then $x_2, x_3 \in A'_1 \setminus A'_2$  and 
$x_5, x_6 \in A'_2 \setminus A'_1$ since $A'_j \subset N_G(z_j)$ for $j=1,2$.  
Also, we have one of the following five cases: $x_1 \in A'_1 \setminus A'_2$, 
$x_1 \in A'_2 \setminus A'_1$, $x_4 \in A'_1 \setminus A'_2$, $x_4 \in A'_2 \setminus A'_1$ or 
$x_1, x_4 \in A'_1 \cap A'_2$. But in each case, we obtain one of the following submatrix:
\[
\begin{matrix}
\; & x_1 & x_3\\
A'_1 & 1 & 1  \\
A'_2 & 0 & 0  \\
\end{matrix}
\quad
\quad
\quad
\begin{matrix}
\; & x_1 & x_5\\
A'_1 & 0 & 0  \\
A'_2 & 1 & 1  \\
\end{matrix}
\quad
\quad
\quad
\begin{matrix}
\; & x_2 & x_4\\
A'_1 & 1 & 1  \\
A'_2 & 0 & 0  \\
\end{matrix}
\quad
\quad
\quad
\begin{matrix}
\; & x_4 & x_6\\
A'_1 & 0 & 0  \\
A'_2 & 1 & 1  \\
\end{matrix}
\quad
\quad
\quad
\begin{matrix}
\; & x_1 & x_4\\
A'_1 & 1 & 1  \\
A'_2 & 1 & 1  \\
\end{matrix}
\]
This contradicts Corollary~\ref{sop-clutters}, since 
$\{ x_1, x_3 \}, \{ x_1, x_5 \}, \{ x_2, x_4 \}, \{ x_4, x_6 \}, \{
x_1, x_4 \}$ are maximal independent sets.  
Therefore, the edge ring $S/I(G)$ does not have a star-system of parameters associated to $\{z_1, z_2\}$.
\end{example}

\begin{appendix}

\section{Procedures for systems of parameters}\label{procedures-sop}

In this section, we give procedures for MAGMA \cite{magma} and \textit{Macaulay}$2$
\cite{mac2}  to determine linear systems of
parameters and algebraic and combinatorial properties of monomial
ideals and graphs. We define a function zeroOneSOP 
that finds a 0-1 linear system of parameters, if one exists.


\begin{procedure}\label{procedure-MAGMA} 
The following procedure for MAGMA \cite{magma} computes the basic parameters 
of an $[n,d,\delta]_q$ linear code $C$ from a generator matrix. This
procedure corresponds to Example~\ref{sop-from-ff} 
\begin{verbatim}
F := GF(5);
A := Matrix(F, 3, 5, [
    1,0,0,1,1,
    0,1,0,1,2,
    0,0,1,1,3]);
C := LinearCode(A);
C;
MinimumDistance(C);
Dimension(C);
Length(C);
\end{verbatim}
\end{procedure}

\begin{procedure}\label{0-1-sop}
This procedure defines a function zeroOneSOP that tests for the 
existence of a 0-1 linear system of parameters for a monomial ideal
quotient 
and returns one when it exists. This procedure is used in
Example~\ref{Grotzsch-example}.
\begin{verbatim}
clearAll;
zeroOneSOP = (I) -> (
S := ring I;V := flatten entries vars S;n := #V;d:= dim(S/I);
minimalPrimesI := minimalPrimes I;
print "==================================================";
print "Search for a 0-1 system of parameters";
print "==================================================";
variablesIndexes := toList(0..n-1);
indicesPrimos := apply(minimalPrimesI,P -> 
(varsP := flatten entries gens P;apply( varsP,
x -> position(V,y -> y === x))));
indicesRestantes := apply(indicesPrimos,P -> select(
variablesIndexes,j -> not isMember(j,P)));
listaSoportes :=flatten apply(1..n,
k -> subsets(variablesIndexes,k));
constraints := apply(indicesRestantes,C -> (apply(listaSoportes,
A -> (apply(C,j -> (if isMember(j,A) then 1_QQ else 0_QQ))))));
constructForm := A -> (sum(apply(A, j -> V#j)));
verifyFinalRank := (coleccionIndices) -> (
for t from 0 to #indicesRestantes-1 do (
C := indicesRestantes#t;m := #C; if m > 0 
then (filas := apply(coleccionIndices,i -> constraints#t#i);
Ap := matrix filas;r := rank Ap;if r =!= m then (
return false;);););return true;);
canComplete := (coleccionIndices) -> (
k := #coleccionIndices;filasRestantes := d-k; 
for t from 0 to #indicesRestantes-1 do (C := indicesRestantes#t;
m := #C;if m > 0 then (if k + filasRestantes < m then 
(return false;);
if k > 0 then (filas :=apply(coleccionIndices,
i -> constraints#t#i);Ap := matrix filas;r := rank Ap;
if r + filasRestantes < m then (return false;););););
return true;);numeroNodos := 0;numeroPodados := 0;
numeroColeccionesCompletas := 0;numeroPasaronRango := 0;
buscarColeccion =(indiceInicio,coleccionActual) -> (
numeroNodos = numeroNodos + 1;
if canComplete(coleccionActual) === false then (
numeroPodados = numeroPodados + 1;return {};);
if #coleccionActual === d then (numeroColeccionesCompletas =
numeroColeccionesCompletas + 1;
if verifyFinalRank(coleccionActual) === false then (
return {};);numeroPasaronRango =numeroPasaronRango + 1;   
return coleccionActual;);
numeroDisponibles :=#listaSoportes - indiceInicio;
numeroNecesarios :=d - #coleccionActual;
if numeroDisponibles < numeroNecesarios then (return {};);
limite :=#listaSoportes - numeroNecesarios;
for i from indiceInicio to limite do (nuevaColeccion :=
append(coleccionActual,i);
resultado :=buscarColeccion(i+1,nuevaColeccion);
if #resultado > 0 then (return resultado;););return {};);
solucion :=buscarColeccion(0,{});
if #solucion > 0 then (formasFinales :=apply(solucion,
                i -> constructForm(listaSoportes#i));
print "   A 0-1 system of parameters was found:";
        return formasFinales;) else (
print    "There is no 0-1 system of parameters";
print "==============================";return {};););
--Examples using the zeroOneSOP function
S = QQ[x1,x2,x3,x4,x5,x6,x7,x8,x9,x10,x11];
--Mycielski--Grotzsch graph
I=monomialIdeal(x1*x2,x2*x3,x3*x4,x1*x5,x4*x5,x2*x6,x5*x6,
x1*x7,x3*x7,x2*x8,x4*x8,x3*x9,x5*x9,x1*x10,x4*x10,x6*x11,
x7*x11,x8*x11,x9*x11,x10*x11)
zeroOneSOP(I)
S=QQ[x1,x2,x3,x4,x5]
I=ideal(x1*x2*x3*x4,x2*x3*x4*x5,x1*x3*x4*x5,x1*x2*x4*x5,
x1*x2*x3*x5)
zeroOneSOP(I)
\end{verbatim}
\end{procedure}

\begin{procedure}\label{procedure-NN}
Let $I\subset S$ be a monomial ideal. This procedure for \textit{Macaulay}$2$ \cite{mac2} compute 
a linear system of parameters and checks whether this is a regular
sequence on $S/I$. It determines whether the ring $S/I$ is
Cohen-Macaulay, and finds the chromatic and independence number of a
graph $G$.
It includes a dimension test for systems of parameters of $S/I$. This
procedure is used in Example~\ref{example-comp-Grotzsch-graph}.
\begin{verbatim}
restart
loadPackage "NoetherNormalization"
loadPackage "EdgeIdeals"
--The Mycielski--Grotzsch graph H has 11 vertices, 20 edges, 
--chromatic number 4, independence number 5. 
--The complement G of H has 11 vertices and 35 edges,
--independence number 2, and chromatic number 6
--I=I(G), J=I(H), edge ideals of G and H
S = QQ[x1,x2,x3,x4,x5,x6,x7,x8,x9,x10,x11];
I = ideal(x1*x3, x1*x4, x2*x4, x2*x5, x3*x5, x1*x6, x1*x8, 
x1*x9,x2*x7, x2*x9, x2*x10,x3*x6, x3*x8, x3*x10,x4*x6, 
x4*x7, x4*x9,x5*x7, x5*x8, x5*x10,x1*x11, x2*x11, x3*x11, 
x4*x11, x5*x11,x6*x7, x6*x8, x6*x9, x6*x10,x7*x8, x7*x9, 
x7*x10,x8*x9, x8*x10,x9*x10);
G=graph(I)
H=complementGraph(G)
J=edgeIdeal(H)
--This gives the dimension of S/I(H)
independenceNumber H, dim J
chromaticNumber(H)
--The first forms up to the dimension of S/I is 
--a Noether Normalization of S/I
toString noetherNormalization(S/I)
f1=(7/4)*x1+4*x2+(5/9)*x3+(1/7)*x4+(3/5)*x5+(7/6)*x6+
2*x7+(9/8)*x8+(5/6)*x9+x11
f2=(9/2)*x1+(2/7)*x2+(1/2)*x3+(1/5)*x4+9*x5+(6/7)*x6+
(3/8)*x7+(6/5)*x8+(10/9)*x9+x10
--This checks whether {f1,f2} is a system of parameters
dim(I+ideal(f1,f2))==0
--This checks whether {f1,f2} is a regular sequence on S/I(G)
(I:f1)==I
(I+ideal(f1)):f2==I+ideal(f1)
--If the following equality is 0, 
--then S/I(G) is Cohen-Macaulay
pdim coker gens gb I==codim I
\end{verbatim}
\end{procedure}

\end{appendix}

\section*{Acknowledgments.} 
The software system \textit{Macaulay}$2$ \cite{mac2}, and the packages 
\textit{NoetherNormalization} \cite{noethernormalization},
\textit{EdgeIdeals} \cite{edgeideals},  
were used to implement algorithms for computing
0-1 systems of parameters, Noether normalizations of
monomial ideals, and graph invariants. The  computer
algebra system \textit{MAGMA} \cite{magma} was used to compute basic
parameters of linear codes.

\section*{Statements and Declarations}  
On behalf of all authors, the corresponding author states that there is no conflict of interest.

No funding was received for conducting this study.

The authors have no relevant financial or non-financial interests to disclose.

Data sharing is not applicable to this article as no datasets were
generated or analyzed during the current study.  

\bibliographystyle{plain}

\begin{thebibliography}{10}

\bibitem{Adams-Reiner} A. Adams and V. Reiner, A colorful Hochster
formula and universal parameters for face rings, \textit{J. Commut.
Algebra}
{\bf 15} (2023), no.~2, 151--176. 

\bibitem{Ball1} S. Ball, On sets of vectors of a finite vector space in which every
subset of basis size is a basis, \textit{J. Eur. Math. Soc. (JEMS)} {\bf 14}
(2012), no.~3, 733--748.

\bibitem{Ball2} S. Ball and J. De~Beule, On sets of vectors of a
finite vector space in which every subset of basis size is a basis
II, \textit{Des. Codes Cryptogr.} {\bf 65} (2012), no.~1-2, 5--14. 

\bibitem{Ball3} S. Ball, Extending small arcs to large arcs, \textit{Eur. J.
Math.} {\bf 4} (2018), no.~1, 8--25.

\bibitem{magma} W. Bosma, J. Cannon and C. Playoust, The Magma 
algebra system. I. The user language, \textit{J. Symbolic Comput.} {\bf 24}
(1997), 235--265.  

\bibitem{BHer}{W. Bruns and J. Herzog, {\em Cohen-Macaulay Rings\/},
Cambridge University Press, Cambridge, Revised Edition, 1998.}   

\bibitem{chvatal} V. Chv\'atal, On certain polytopes associated with
graphs,  {\it J. Combin. Theory Ser. B} {\bf 18} (1975), 138--154. 

\bibitem{dougherty} S.~T. Dougherty, {\it Combinatorics and finite geometry}, 
Springer Undergraduate Mathematics Series, Springer, Cham, 2020.

\bibitem{Eisen}{D. Eisenbud, {\it Commutative Algebra with a View
toward Algebraic Geometry\/}, Graduate
Texts in  Mathematics {\bf 150}, Springer-Verlag, 1995.}

\bibitem{FHV}{C. Francisco, H.T. H$\rm \grave{a}$ and A. Van Tuyl, Colorings of
  hypergraphs, perfect graphs, and associated primes of powers of
  monomial ideals, \textit{J. Algebra} {\bf 331} (2011), no. 1,
224--242.}


\bibitem{edgeideals} C. Francisco, A. Hoefel and A. Van Tuyl, EdgeIdeals: a package for
(hyper)graphs, {\it J. Softw. Algebra Geom.} {\bf 1} (2009), 1--4.


\bibitem{Gentle} J.~E. Gentle, {\it Matrix algebra---theory,
computations and applications in statistics}, third edition, 
Springer Texts in Statistics, Springer, Cham, 2024. 

\bibitem{Ghorpade-Lachaud} S. Ghorpade and G. Lachaud, 
Hyperplane sections of Grassmannians and the number of MDS linear
codes, Finite Fields Appl. {\bf 7} (2001), no. 4, 468--506.

\bibitem{clutters}{I. Gitler, C. Valencia and R. H. Villarreal, 
A note on Rees algebras and the MFMC property, {\it Beitr\"age
Algebra Geom.} {\bf 48}
(2007), no. 1, 141--150.}

\bibitem{mac2}{D. Grayson and M. Stillman, {\em Macaulay\/}$2$, 1996.
Available at \url{https://www.macaulay2.com/}}   

\bibitem{Ha-VanTuyl} H. T. H$\rm \grave{a}$ and A. Van Tuyl, Monomial ideals,
edge ideals of hypergraphs, and their graded Betti numbers, {\it J.
Algebraic Combin.} {\bf 27} (2008), 215--245.

\bibitem{herzog-hibi-book} J. Herzog and T. Hibi, {\it Monomial
Ideals}, 
Graduate
Texts in  Mathematics {\bf 260}, Springer-Verlag, 2011.

\bibitem{2021-HerzogMoradi} J. Herzog and S. Moradi, 
Systems of parameters and the Cohen-Macaulay property,  
J. Algebraic Combin. {\bf 54} (2021), no. 4, 1261--1277. 

\bibitem{Huffman-Pless} W. C. Huffman and V. Pless,
\textit{Fundamentals of error-correcting 
codes}, Cambridge University Press, Cambridge, 2003. 

\bibitem{kind} B. Kind and P. Kleinschmidt, Sch\"albare Cohen-Macauley-Komplexe und
ihre Parametrisierung, \textit{Math. Z.} {\bf 167} (1979), no.~2, 173--179.

\bibitem{lovasz} L. Lov\'asz, Normal hypergraphs and the perfect
graph conjecture, {\it Discrete Math.} {\bf 2} (1972), no. 3, 253--267. 

\bibitem{ass-powers} J. Mart\'{\i}nez-Bernal, 
S. Morey, and R. H.
Villarreal, Associated primes of powers of edge ideals, 
{\it Collect. Math.} {\bf 63} (2012),
no. 3, 361--374.

\bibitem{Mats}{H. Matsumura, {\it Commutative Ring Theory\/}, 
Cambridge
Studies in Advanced Mathematics {\bf 8}, 
Cambridge University Press, 1986.}

\bibitem{readdy} M.~A. Readdy, The Yuri Manin ring and its $\mathscr{B}_n$-analogue, 
\textit{Adv. in Appl. Math.} {\bf 26} (2001), no.~2, 154--167.

\bibitem{Santos} D. A. Santos, Linear Algebra Notes. Revision, 
January 2, 2010.\\
\url{https://scipp.ucsc.edu/~haber/ph116A/new_linearalgebra.pdf}

\bibitem{Smith} D.~E. Smith, On the Cohen-Macaulay property in
commutative algebra and simplicial topology, Pacific J. Math. {\bf
141} (1990), no.~1, 165--196.

\bibitem{noethernormalization}
B. Snapp and N. Stapleton, {\em NoetherNormalization: place an ideal in
Noether normal position}, \textit{Macaulay}$2$ package. Available at
\url{https://github.com/Macaulay2/M2/tree/master/M2/Macaulay2/packages}

\bibitem{soifer} A. Soifer, {\it The new mathematical coloring book---
mathematics of coloring and the colorful life of its creators},
second edition,  
Springer, New York, 2024.

\bibitem{Sta1}{R. P. Stanley, Hilbert functions of graded algebras,
Adv. Math. {\bf 28} (1978), 57--83.}  

\bibitem{Stanley-sop}{R. Stanley, Balanced {C}ohen-{M}acaulay 
complexes, Trans. Amer. Math. Soc. {\bf 249} (1979), 
139--157.}

\bibitem{Sta2}{R. P. Stanley, {\it Combinatorics and Commutative
Algebra\/}, Birkh{\"a}user Boston, 2nd ed., 1996.}   

\bibitem{Vi2}{R. H. Villarreal, Cohen--{M}acaulay graphs, {\it Manuscripta
Math.} {\bf 66} (1990), 277--293.}

\bibitem{monalg-3rd-edition} R. H. Villarreal, \textit{Monomial
Algebras}, third edition, Monographs and Research Notes in Mathematics,
Chapman and Hall/CRC, Boca Raton, FL, 2026.  
\end{thebibliography}

\end{document}